\newcommand{\ov}{\overline}
\newcommand{\R}{\mathbb{R}}
\newcommand{\E}{\mathbb{E}}
\newcommand{\beq}{\begin{equation}}
\newcommand{\eeq}{\end{equation}}
\def\a{\alpha}
\def\b{\beta}
\def\d{\delta}
\def\r{\rho}
\def\o{\omega}
\def\r{\rho}
\def\O{\Omega}
\def\G{\Gamma}
\def\t{\tau}
\def\e{\varepsilon}
\def\pd{\partial}
\newcommand{\cA}{{\mathcal A}}
\newcommand{\cE}{{\cal E}}
\newcommand{\fDz}{D_{(0,t]}^{1-\beta}}
\newcommand{\fdz}{\partial_{(0,t]}^{\beta}}
\newcommand{\fIz}{I_{(0,t]}^{\beta}}
\newcommand{\supp}{{\rm supp}}
\newcommand{\tgh}{{\tau_h}}
\newtheorem{theorem}{Theorem}[section]
\newtheorem{lemma}[theorem]{Lemma}
\newtheorem{definition}[theorem]{Definition}
\newtheorem{proposition}[theorem]{Proposition}
\newtheorem{remark}[theorem]{Remark}
\numberwithin{equation}{section}
\begin{document}
\title{\Large \bf A Hopf-Lax formula for   Hamilton-Jacobi equations   with Caputo~time derivative}
\author{Fabio Camilli\footnotemark[1] \and Raul De Maio\footnotemark[1]\and Elisa Iacomini\footnotemark[1]}
\date{\today}
\maketitle
\begin{abstract}
We prove a representation formula of   Hopf-Lax type for   the  solution of  a Hamilton-Jacobi equation
 involving Caputo time-fractional derivative. Equations of these type are associated with optimal control problems where the controlled
dynamics  is replaced by a time-changed stochastic process  describing the trajectory of a particle subject to random trapping effects.
\end{abstract}
%%%%
 \begin{description}
    \item [{\bf AMS subject classification}:] 35R11,   26A33, 49L20.
     \item[{\bf Keywords}:]   Caputo time derivative, fractional Hamilton-Jacobi  equation, Hopf-Lax formula, subordinator.
\end{description}
\footnotetext[1]{Dip. di Scienze di Base e Applicate per l'Ingegneria,  ``Sapienza'' Universit{\`a}  di Roma, via Scarpa 16,
00161 Roma, Italy, ({\tt e-mail: fabio.camilli,raul.demaio,elisa.iacomini@sbai.uniroma1.it})}

\pagestyle{plain}
\pagenumbering{arabic}
%%%%%%%%%%%
%   Introduction        %
%%%%%%%%%%%
\section{Introduction}
In the recent times, several classical parabolic equations have been revisited by replacing  the  standard derivative  with fractional ones
\cite{acv1,CDM,gn,ly,m,ty}. Fractional time derivatives are given   by convolution integral of the  time-derivative with   power-law kernels.
They arise  in several   phenomena in connection with anomalous diffusion   and   are typical for memory effects in complex systems  (see \cite{mk} for a review). The   probabilistic interpretation of the corresponding physical models leads to the study of
subdiffusive  or, more in general, non markovian  processes.  From a mathematical point of view, the presence of nonlocal terms with respect to the time variable poses several technical difficulties.\par
Aim of this paper is to study the connection  between Hamilton-Jacobi equations and anomalous diffusions,
recovering a subordinated version of the Hopf-Lax formula.
Consider   the Cauchy problem
\begin{equation}\label{HJ}
\left\{\begin{array}{ll}
  \pd_t u+H(Du)=0\quad & (x,t)\in Q, \\
 u(x,0)=g(x)& x\in\R^d,
\end{array}
\right.
\end{equation}
where the Hamiltonian $H$ is convex and superlinear and $Q=\R^d\times(0,\infty)$.
Then the classical Hopf-Lax formula
\begin{equation}\label{HL}
    u(x,t)=\min_{y\in\R^d}\left\{tL\left(\frac{x-y}{t}\right) + g(y) \right\},
\end{equation}
where  $L$ is the Legendre transform of  $H$,
gives  the unique viscosity solution of \eqref{HJ}. Moreover, if $g$ is Lipschitz continuous, then
$u$ is also Lipschitz continuous and it is the maximal almost everywhere (a.e.) subsolution of \eqref{HJ}.
 Formula \eqref{HL} is derived from the optimal control interpretation of  the  Cauchy problem. Indeed, the Hamilton-Jacobi equation in \eqref{HJ}
can be interpreted  as the dynamic programming equation satisfied by the value function of a control problem with dynamics
\begin{equation}\label{dyn}
    \left\{
\begin{array}{ll}
  \dot x(s)= a(s) &  \quad s\in (0,t), \\
  x(t)=x,
\end{array}
\right.
\end{equation}
where $a :(0,\infty)\to\R^d$  is the control variable, and cost functional
\begin{equation}\label{cost}
    J(x,t,a)=\int_0^t L(a(s))ds +g(x(0)).
\end{equation}
Since $L$ is independent of $(x,t)$,  straight lines are proved to be the minimizing trajectories
in \eqref{cost} and \eqref{HL} is so obtained (see \cite{E,L} for details).
In this paper we consider the Cauchy problem
\begin{equation}\label{HJf}
\left\{\begin{array}{ll}
\fdz u+H(Du)=0\quad & (x,t)\in Q, \\[8pt]
 u(x,0)=g(x) & x\in\R^d,
\end{array}
\right.
\end{equation}
where
$$
\fdz u(x,t)= \frac{1}{\G(1-\b)}\int_0^t\frac{\pd_\t u (x,\t)}{(t-\t)^\b}d\t
$$
is the Caputo time-fractional derivative of order $\b\in (0,1)$ of $u$.
To deduce a   Hopf-Lax formula for \eqref{HJf} we rely, as in the classical case,  on the optimal control interpretation of the problem.
Let $E_t$ be a continuous, nondecreasing stochastic process defined as the inverse of a $\beta$-stable subordinator $D_t$, i.e. $E_t := \inf\{\tau>0 : D_\tau>t\}$ for $t\geq 0$.   The stochastic process $X(t)=x(E(t))$, where $x(t)$ is given by \eqref{dyn}, solves the stochastic differential equation
\begin{equation}\label{dynf}
    \left\{
\begin{array}{ll}
  dX(t)= \bar a(t)dE_t, &  t\in (0,\infty) \\
  X(t)=x,
\end{array}
\right.
\end{equation}
where $\bar a(t)=a(E(t))$ for $a\in\cA$.   The subordinator  $E_t$ can be interpreted as a change of the time-scale  which introduces   trapping events  in the evolution of the process $X(t)$,     whereas, when  not trapped,     the particle moves according to the standard  dynamics $x(t)$.
 Define the cost functional
\begin{equation}\label{costf}
    J_\b(x,t, a)=\E_{x,t}\left\{  \int_0^tL(   a(s))dE_s +g(X(0))\right\}.
\end{equation}
It is clear that straight lines are still the optimal trajectories minimizing  \eqref{costf}, but traveled at a velocity which depends on the time scale $E_t$. We prove that the value function $u_\b$ associated to the time-changed control problem is given  by  the Hopf-Lax formula
 \[
u_\b(x,t)= \E_{x,t}\left[\min_{y\in\R^d}\left\{  E_t   L\left(\frac{x-y}{E_t}\right)+g(y)\right\}    \right].
 \]
The previous formula  is similar to \eqref{HL}, but it takes into account the  average  speed at which the straight trajectories are traveled.
We also prove that $u_\b$ is the maximal  subsolution  and an a.e. solution of   problem \eqref{HJf}, but we are
not able to prove that it is a viscosity solution in the sense of the definition introduced in \cite{gn,ty} (see Remark \ref{visco} for more details).
We can rewrite formula \eqref{HL} as the  convolution of the solution of \eqref{HJ} with a kernel given by the probability density function (PDF) of the process $E_t$, i.e.
\[u_\b(x,t)=\int_0^\infty u(x,s)\cE_\b(s,t)ds.\]
Employing a standard numerical solver for  \eqref{HJ} to compute $u$, we use   the previous formula  to illustrate with  some numerical examples the effect of the Caputo derivative on  control problems and fronts propagation.\par
%%%%%%%%%%%%%%%%%
The paper is organized as follows. In Section \ref{sec:hopf_lax}, we briefly recall some properties of the  subordinator process
 and we introduce the Hopf-Lax formula. Section \ref{sec:HJf} is devoted to the time-fractional
Hamilton-Jacobi equation. Finally, in Section \ref{sec:numerical}, some numerical examples are discussed in order  to stress the main differences with the classical theory.

%%%%%%%%%%%%%%%
%                                            %
%%%%%%%%%%%%%%%
\section{The  subordinator process and the Hopf-Lax formula}\label{sec:hopf_lax}
Let $\{D_\t\}_{\t\geq 0}$ be a  stable subordinator of order $\beta \in (0,1)$, i.e. a one-dimensional, non-decreasing L\'evy process whose PDF $g(s,\t)$ has Laplace transform equal to $e^{-\t s^{\beta}}$. The inverse stable  process $\{E_t\}_{t\geq 0}$, defined as the first passage time of the process $D_\t$ over the level $t$, i.e.
$$E_t = \inf\{\t>0 : D_\t > t\},$$
has sample paths which are continuous,  non-decreasing and such that $E_0 = 0$,  $E_t \to \infty$ as $t \to \infty$.
It is worthwhile  to observe that  $E_t$ does  not have stationary or independent increments. The process $E_t$  can be used to model systems with two time scales: a deterministic one given by the standard time $t$, referred to the external  observer, and  a stochastic one given by $E_t$, internal to the physical process (see \cite{mgz,ms,mst}).\\
We recall some basic properties of the process $E_t$ which we will exploit in the following
%%%
\begin{proposition}
For $t>0$, it holds:
\begin{itemize}
\item For any $\alpha>0$,   there exists a constant $C(\alpha, \beta) > 0$ such that
\begin{equation}\label{moment}
\E[E_t^\alpha] = C(\alpha, \beta) t^{\alpha\beta}.
\end{equation}
\item The process $E_t$  has PDF
\begin{equation}\label{pdf_trick}
\cE_\beta(s,t) = \frac{t}{\beta}s^{-1 - \frac{1}{\beta}}g(s,t).
\end{equation}
\end{itemize}
\end{proposition}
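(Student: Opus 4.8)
The plan is to establish both identities directly from the defining relation $E_t = \inf\{\tau > 0 : D_\tau > t\}$, which gives the fundamental duality $\{E_t < \tau\} = \{D_\tau > t\}$ (up to null sets, using continuity of the paths of $D_\tau$ in distribution). This duality is the workhorse for everything that follows.

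For the PDF formula \eqref{pdf_trick}, I would start from the distributional identity $\PP(E_t \le s) = \PP(D_s \ge t)$ and differentiate in $s$. Write $G(s,t) = \PP(D_s \le t) = \int_0^t g(r,s)\,dr$, so that $\PP(E_t \le s) = 1 - G(s,t)$ and $\cE_\beta(s,t) = -\partial_s G(s,t)$. The key input is the self-similarity of the stable subordinator: since $D_s \stackrel{d}{=} s^{1/\beta} D_1$, one has $g(r,s) = s^{-1/\beta} g(r s^{-1/\beta}, 1)$. Substituting this scaling relation into $G(s,t)$, changing variables in the integral, and carefully differentiating in $s$ (the $s$-dependence sits both in the prefactor and inside the argument, so a chain-rule computation is needed), the two contributions should combine; using the identity $\partial_s g(r,s) $ obtained from the governing equation for the stable density — equivalently, the fact that $g(\cdot,s)$ satisfies $\partial_s g = -\partial_r^\beta g$ in the Riemann-Liouville sense — one collapses the expression to the stated $\frac{t}{\beta} s^{-1-1/\beta} g(s,t)$. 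An alternative, cleaner route is to verify the claimed density via Laplace transforms: compute $\int_0^\infty e^{-\lambda t}\cE_\beta(s,t)\,dt$ and check it equals $\lambda^{\beta-1} e^{-s\lambda^\beta}$, which is the known Laplace transform (in $t$) of the inverse stable density; this sidesteps the delicate differentiation.

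For the moment formula \eqref{moment}, once \eqref{pdf_trick} is in hand I would write
\[
\E[E_t^\alpha] = \int_0^\infty s^\alpha \cE_\beta(s,t)\,ds = \frac{t}{\beta}\int_0^\infty s^{\alpha - 1 - \frac{1}{\beta}} g(s,t)\,ds,
\]
then exploit scaling once more: with $g(s,t) = t^{-1/\beta} g(s t^{-1/\beta},1)$ and the substitution $r = s t^{-1/\beta}$, the $t$-powers factor out as $t^{\alpha\beta}$, leaving
\[
C(\alpha,\beta) = \frac{1}{\beta}\int_0^\infty r^{\alpha-1-\frac{1}{\beta}} g(r,1)\,dr,
\]
which is a finite positive constant because $g(\cdot,1)$ decays rapidly at $+\infty$ and behaves like a stretched exponential near $0$ (so all negative moments below a threshold exist; one checks $\alpha - 1 - 1/\beta > -\infty$ is harmless and the near-zero singularity is integrable). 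Equivalently, $\E[E_t^\alpha]$ can be related to $\E[D_1^{-\alpha\beta}]$ via the duality, and finiteness of negative moments of a positive stable law is classical.

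The main obstacle I anticipate is making the differentiation step for \eqref{pdf_trick} rigorous — justifying the interchange of $\partial_s$ with the integral defining $G(s,t)$ and correctly handling the boundary behavior as $s \to 0^+$, where $E_t$ has an atom-free but concentrated distribution. I expect the Laplace-transform verification to be the path of least resistance and would present that as the primary argument, relegating the scaling manipulation to a remark. For \eqref{moment}, the only real content is the convergence of the integral defining $C(\alpha,\beta)$, which follows from standard tail and small-time asymptotics of stable densities; I would cite these rather than reprove them.
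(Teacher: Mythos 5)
The paper itself offers no proof of this proposition: it simply recalls these standard facts about the inverse stable subordinator (they are taken from the literature, cf.\ \cite{ms,mst}), so your argument stands on its own. Your overall strategy is the right one: the duality $\PP(E_t\le s)=\PP(D_s\ge t)$ plus self-similarity $D_s\stackrel{d}{=}s^{1/\beta}D_1$, with the Laplace-transform check as a safety net, is exactly how these identities are proved, and the Laplace verification does work (the transform in $t$ of the density equals $-\partial_s\left(\lambda^{-1}e^{-s\lambda^{\beta}}\right)=\lambda^{\beta-1}e^{-s\lambda^{\beta}}$). Note, however, that the detour through the governing equation $\partial_s g=-\partial_r^{\beta}g$ is unnecessary: writing $\PP(E_t\le s)=1-\PP(D_s<t)=1-G_\beta\left(ts^{-1/\beta}\right)$, with $G_\beta$ the CDF of $D_1$, a single chain rule in $s$ yields $\cE_\beta(s,t)=\frac{t}{\beta}s^{-1-1/\beta}g_\beta\left(ts^{-1/\beta}\right)$. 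This is also how \eqref{pdf_trick} must be read (consistently with \eqref{HJfnum}): the one-variable density of $D_1$ evaluated at $ts^{-1/\beta}$, not the two-variable density evaluated at $(s,t)$.

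The genuine gaps are in your moment step. First, the scaling computation as you wrote it does not produce $t^{\alpha\beta}$: inserting $g(s,t)=t^{-1/\beta}g(st^{-1/\beta},1)$ and substituting $r=st^{-1/\beta}$ in $\frac{t}{\beta}\int_0^\infty s^{\alpha-1-1/\beta}g(s,t)\,ds$ gives the exponent $1+(\alpha-1-1/\beta)/\beta$, which is not $\alpha\beta$ (for $\beta=\tfrac12$, $\alpha=1$ it gives $-3$ instead of $\tfrac12$); this is the symptom of having taken \eqref{pdf_trick} literally with the wrong argument placement. With the correct density the substitution $u=ts^{-1/\beta}$ gives $\E[E_t^\alpha]=t^{\alpha\beta}\int_0^\infty u^{-\alpha\beta}g_\beta(u)\,du=t^{\alpha\beta}\,\E[D_1^{-\alpha\beta}]$; even more simply, the duality yields $E_t\stackrel{d}{=}(t/D_1)^{\beta}$, which proves \eqref{moment} in one line without using \eqref{pdf_trick} at all. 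Second, your finiteness argument has the asymptotics of the stable density backwards: $g_\beta$ does \emph{not} decay rapidly at $+\infty$; it has the heavy tail $g_\beta(r)\sim c_\beta r^{-1-\beta}$ (the subordinator has infinite mean), while the superexponential (stretched-exponential) decay occurs at $0^+$. It is the behavior at $0^+$ that makes $\E[D_1^{-\alpha\beta}]$ finite for every $\alpha>0$; the tail only needs to be integrable. Indeed, the constant you wrote, $\frac1\beta\int_0^\infty r^{\alpha-1-1/\beta}g(r,1)\,dr$, would diverge at infinity for large $\alpha$ precisely because of that heavy tail, which is another sign the change of variables landed on the wrong expression. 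With these two corrections (and the simpler chain-rule derivation of \eqref{pdf_trick}), your plan is sound.
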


%%%%%%%%%%
For the proof of the following    result we refer to \cite{mst}
\begin{proposition}
The function $\cE_\b(\cdot,t)$ is a weak solution of
\begin{equation}\label{eq_E}
\fdz \cE_\b(r,t)=-\pd_r \cE_\b(r,t), \qquad r\in (0,\infty).
\end{equation}
\end{proposition}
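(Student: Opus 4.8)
The plan is to deduce \eqref{eq_E} from an elementary Laplace‑transform computation in the time variable $t$, exploiting the first‑passage description $E_t=\inf\{\tau>0:D_\tau>t\}$ of the inverse subordinator. Throughout, let $\widehat v(\eta)=\int_0^\infty e^{-\eta t}v(t)\,dt$ denote the Laplace transform in $t$. First I would record the duality between $E_t$ and $D_\tau$: since $D$ is nondecreasing one has $\{E_t\le r\}=\{D_r\ge t\}$, so, writing $g(\cdot,r)$ for the density of $D_r$, the distribution function of $E_t$ is $\PP(E_t\le r)=1-\int_0^t g(s,r)\,ds$, and hence
\[
\cE_\b(r,t)=\pd_r\PP(E_t\le r)=-\,\pd_r\int_0^t g(s,r)\,ds .
\]

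Taking the Laplace transform in $t$ and using the defining property $\int_0^\infty e^{-\eta s}g(s,r)\,ds=\E[e^{-\eta D_r}]=e^{-r\eta^\b}$ of the $\b$-stable subordinator (so that $\int_0^\infty e^{-\eta t}\big(\int_0^t g(s,r)\,ds\big)\,dt=\eta^{-1}e^{-r\eta^\b}$), one gets
\[
\widehat{\cE_\b}(r,\eta)=-\,\pd_r\big(\eta^{-1}e^{-r\eta^\b}\big)=\eta^{\b-1}e^{-r\eta^\b}.
\]
For each fixed $r>0$ this forces $\cE_\b(r,0^+)=0$ (consistent with $E_0=0$ a.s.\ and the continuity of $t\mapsto E_t$), so the Laplace transform of the Caputo derivative carries no boundary term and
\[
\widehat{\fdz\cE_\b}(r,\eta)=\eta^\b\,\widehat{\cE_\b}(r,\eta)=\eta^{2\b-1}e^{-r\eta^\b}.
\]
On the other hand $-\pd_r\widehat{\cE_\b}(r,\eta)=-\pd_r\big(\eta^{\b-1}e^{-r\eta^\b}\big)=\eta^{2\b-1}e^{-r\eta^\b}$, which is the same expression. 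Since for each $r>0$ the two sides of \eqref{eq_E} have the same Laplace transform in $t$, injectivity of the Laplace transform gives that they coincide for a.e.\ $t$, which is the claim.

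To turn this into a rigorous proof of the \emph{weak} identity the work is all in the regularity bookkeeping: one needs the smoothness of $\cE_\b(\cdot,t)$ on $(0,\infty)$ and the tail and near‑origin decay it inherits from the stable density $g$ (so that $\pd_r\cE_\b$ and the convolution integral defining $\fdz\cE_\b$ converge and are integrable), the legitimacy of differentiating under the integral sign in both $r$ and $t$, the formula $\widehat{\fdz v}(\eta)=\eta^\b\widehat v(\eta)-\eta^{\b-1}v(0^+)$ for $v=\cE_\b(r,\cdot)$ (which requires $v$ to be locally absolutely continuous on $[0,\infty)$), and finally the passage from ``for a.e.\ $t$'' to a bona fide weak formulation, e.g.\ $\int_0^\infty\fdz\cE_\b(r,t)\,\varphi(r)\,dr=\int_0^\infty\cE_\b(r,t)\,\varphi'(r)\,dr$ for all $\varphi\in C_c^\infty((0,\infty))$. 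The genuinely delicate point is the behaviour as $r\to0^+$, where the law of $E_t$ degenerates toward a Dirac mass; controlling this is what makes a self‑contained argument technical, and is why one may instead simply invoke \cite{mst}. An essentially equivalent alternative would start from the governing equation satisfied by the stable subordinator density $g$ in its space variable and transport it to $\cE_\b$ through the identity \eqref{pdf_trick}, but the Laplace‑transform route above looks the most economical.
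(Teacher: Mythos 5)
Your proposal is correct, but be aware that the paper does not actually prove this proposition: it defers entirely to \cite{mst}, so you are supplying the argument the authors outsource, and your Laplace-transform route is essentially the standard derivation found in that reference. The chain of steps checks out: $\{E_t\le r\}=\{D_r\ge t\}$ gives $\cE_\b(r,t)=-\pd_r\int_0^t g(s,r)\,ds$; the defining relation $\int_0^\infty e^{-\eta s}g(s,r)\,ds=e^{-r\eta^\b}$ yields $\widehat{\cE_\b}(r,\eta)=\eta^{\b-1}e^{-r\eta^\b}$; the transform rule for the Caputo derivative with $\cE_\b(r,0^+)=0$ (valid for fixed $r>0$, since by \eqref{pdf_trick} and the rapid decay of the stable density near the origin the map $t\mapsto\cE_\b(r,t)$ vanishes to all orders as $t\to0^+$) gives $\eta^{2\b-1}e^{-r\eta^\b}$, which matches $-\pd_r\widehat{\cE_\b}(r,\eta)$; injectivity of the Laplace transform and testing against $\varphi\in C_c^\infty((0,\infty))$ then give the weak form of \eqref{eq_E}. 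One small correction to your closing discussion: for fixed $t>0$ the law of $E_t$ does not degenerate as $r\to0^+$ (the density has the finite limit $t^{-\b}/\G(1-\b)$); the Dirac concentration at $r=0$ occurs as $t\to0^+$. The real reason the statement is confined to $r\in(0,\infty)$ is the singular boundary contribution at $r=0$ that appears if one writes the equation on the closed half-line — exactly the term $-\delta_0(r)\delta_0(t)$ in the identity $\pd_t\cE_\b(r,t)=-\fDz[\pd_r\cE_\b(r,t)]-\delta_0(r)\delta_0(t)$ which the paper later quotes from \cite{CDM} — and your restriction to test functions supported away from $r=0$ is precisely what avoids it, so your argument is sound as stated.
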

%%%%%%%%%%%%%%%%%%%%%%%%%%%%%%%%
In the following, we assume that
\begin{align}\label{hyp_H}
    \text{$H:\R^d\to \R$ is convex and }\lim_{|p|\to \infty} \frac{H(p)}{p}=+\infty;\\
\label{hyp_g}
    \text{$g:\R^d\to \R$ is Lipschitz continuous, bounded.}
\end{align}
%%%%
%       %
%%%%
 The Legendre transform $L$ of $H$, defined by $L(q)=\sup\{pq-H(p)\}$,   is well defined,
convex and superlinear. Let  $u_\b$ be the value function  of the stochastic control problem
\eqref{dynf}-\eqref{costf}, i.e.
\begin{equation}\label{valuef}
    u_\b(x,t)=\inf_{\a\in\cA}J_\b(x,t,\a),
\end{equation}
where $\cA:=\{\a:(0,\infty)\to\R^d:\,\a\text{ is a progressively measurable process}\}$.
%%%%%%%%%%%%%%%
%                                             %
%%%%%%%%%%%%%%%
\begin{proposition}
 The value function $u_\b$ defined in \eqref{valuef} is given by the Hopf-Lax formula
 \begin{equation}\label{HLf}
     u_\b(x,t)= \E_{x,t}\left[\min_{y\in\R^d}\left\{  E_t   L\left(\frac{x-y}{E_t}\right)+g(y)\right\}    \right].
 \end{equation}
\end{proposition}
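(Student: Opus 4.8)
The plan is to reduce the stochastic control problem \eqref{dynf}--\eqref{costf} to the classical one by conditioning on the path of $E_t$, exploiting the fact that the only role of the subordinator is to reparametrise time. First I would observe that by the monotone change of variable $s \mapsto E_s$ inside the integral $\int_0^t L(a(s))\,dE_s$, together with $X(0)=x(E(0))=x(0)$, the cost functional \eqref{costf} can be written as
\begin{equation*}
J_\b(x,t,a)=\E_{x,t}\left[\int_0^{E_t} L(a(E^{-1}(\s)))\,d\s + g(x(0))\right],
\end{equation*}
so that, once a realisation of $E_t$ is fixed, the inner quantity is exactly the classical cost \eqref{cost} for the dynamics \eqref{dyn} run over the (random) horizon $E_t$, with control $\bar a(\s)=a(E^{-1}(\s))$. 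Hence, conditionally on $E_t$, minimising over $\cA$ is the same as minimising the classical functional over horizon $E_t$, and the classical Hopf-Lax result \eqref{HL} gives that this conditional infimum equals $\min_{y}\{E_t L((x-y)/E_t)+g(y)\}$.

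\textbf{Key steps.} (i) Show the lower bound $u_\b(x,t)\ge \E_{x,t}[\min_y\{E_tL((x-y)/E_t)+g(y)\}]$: for any admissible $\a\in\cA$, use Jensen/the pathwise classical inequality $\int_0^{E_t}L(\bar a)\,d\s + g(x(0)) \ge E_tL((x-x(0))/E_t)+g(x(0)) \ge \min_y\{E_tL((x-y)/E_t)+g(y)\}$, valid $\omega$ by $\omega$ by convexity of $L$ (this is the standard argument behind \eqref{HL}), then take $\E_{x,t}$. (ii) Show the matching upper bound: given $\e>0$ and $\omega$, let $y^*(\omega)$ be a near-minimiser of $y\mapsto E_t(\omega)L((x-y)/E_t(\omega))+g(y)$ and take the straight-line control $\bar a \equiv (x-y^*)/E_t$ on $(0,E_t)$; pulling back through $E_t$ this defines a control $\a^*(s)=\bar a(E(s))$ which is progressively measurable provided $y^*$ is chosen measurably in $\omega$ (measurable selection), so $\a^*\in\cA$, and it achieves $J_\b(x,t,\a^*)\le \E_{x,t}[\min_y\{\cdots\}]+\e$. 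Let $\e\to0$. (iii) Conclude.

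\textbf{Main obstacle.} The delicate point is the upper bound, specifically the measurable-selection / adaptedness issue: the optimal endpoint $y^*$ depends on the random value $E_t$, and $E_t$ is not known until time $t$, so the ``control'' $\bar a=(x-y^*)/E_t$ is not adapted in the naive sense. One must check that the class $\cA$ of progressively measurable processes is rich enough — since $\bar a$ is a constant (in the time variable) random vector and the cost only sees it through the integral against $dE_s$, progressive measurability of $s\mapsto\a^*(s)=(x-y^*)/E_t$ holds once $y^*$ is a measurable function of the path of $E_\cdot$, which one obtains from a measurable selection theorem applied to the lower-semicontinuous map $(y,\omega)\mapsto E_t(\omega)L((x-y)/E_t(\omega))+g(y)$ (coercivity in $y$ from superlinearity of $L$ plus boundedness of $g$ guarantees the $\min$ is attained). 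A secondary technical point is justifying the change of variables $\int_0^t L(a)\,dE_s = \int_0^{E_t}L(\bar a)\,d\s$ rigorously for the non-decreasing pure-jump-plus-drift paths of $E_t$ (Lebesgue--Stieltjes), and checking integrability so that $\E_{x,t}$ of the bounds is finite, which follows from \eqref{moment} with $\alpha=1$ together with the linear growth of $L$ along the minimising direction and boundedness of $g$.
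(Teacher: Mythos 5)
Your proposal follows essentially the same route as the paper: the lower bound is Jensen's inequality for the measure $dE_s$ applied pathwise (your time-change to horizon $E_t$ is just a repackaging of this), and the upper bound uses the constant straight-line control $(x-y^*)/E_t$ toward a (measurably selected) minimizer, exactly as the paper does with its random variable $Y$. Your explicit attention to measurable selection and to the adaptedness of the anticipating control is a presentational refinement of the paper's argument, which simply asserts that the minimizer $Y$ is well defined and uses the corresponding control without further comment.
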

\begin{proof}
Fix $(x,t)\in\R^d\times (0,+\infty)$. Let $Y:\O\to\R^d$ be a r.v. such that
\begin{equation}\label{id}
\min_{y\in\R^d}\left\{  E_t   L\left(\frac{x-y}{E_t}\right)+g(y)\right\}=   E_t   L\left(\frac{x-Y}{E_t}\right)+g(Y).
\end{equation}
Note that $Y$ is well defined since, by \eqref{hyp_H} and \eqref{hyp_g}, the minimum in the LHS of
\eqref{id} is achieved for any $\o\in\Omega$.
For the control law $\bar a(s)=(x-Y)/E_t$, consider the solution $X(s)$ of \eqref{dynf}.  Then
\[X(s)=Y+\frac{x-Y}{E_t}E_s\]
and therefore  $X(0)=Y$ (recall that $E_0=0$) and $X(t)=x$. Hence
\begin{align*}
    u_\b(x,t)&\le \E_{x,t}\left\{ \int_0^tL\left( \frac{x-Y}{E_t}\right)dE_s +g(X(0))\right\}=\E_{x,t}\left\{  E_t   L\left(\frac{x-Y}{E_t}\right)+g(Y)\right\}\\
    &=\E_{x,t}\left[ \min_{y\in\R^d}\left\{  E_t   L\left(\frac{x-y}{E_t}\right)+g(y)\right\}\right].
\end{align*}
We prove the reverse inequality. Given a control $\a\in \cA$,  by the convexity of $L$ we have
\[
L\left(\frac{1}{E_t}\int_0^{t} \a(s)dE_s\right)\le \frac{1}{E_t}\int_0^{t} L( \a(s))dE_s.
\]
If $X(t)$ is the solution of \eqref{dynf}, since  $\int_0^{t} \a(s)dE_s=x-X(0)$, by the previous inequality we get
 \begin{align*}
\E_{x,t}\left\{ \int_0^{t} L( \a(s))dE_s +g(X(0))\right\} &\ge \E_{x,t}\left\{E_tL\left(\frac{x-X(0)}{E_t}\right)+g(X(0))\right\}\\
&\ge \E_{x,t}\left[\min_{y\in\R^d}\left\{  E_t   L\left(\frac{x-y}{E_t}\right)+g(y)\right\}    \right]
\end{align*}
and, for the arbitrariness of $a\in\cA$,
\[u_\b(x,t)\ge  \E_{x,t}\left[\min_{y\in\R^d}\left\{  E_t   L\left(\frac{x-y}{E_t}\right)+g(y)\right\}    \right]. \]
\end{proof}
In order to prove some regularity properties of the   function $u_\b$, we need  a preliminary result.
%%%%%%%%%
%       %
%%%%%%%%%
\begin{lemma}\label{ppd}
For $s\in [0,t)$, we have
\begin{equation}\label{L1}
   u_\b(x,t)=\E_{x,t}\left[ \min_{y\in\R^d}\left\{( E_t -E_s)  L\left(\frac{x-y}{E_t-E_s}\right)+u_\b(y,s)\right\}\right].
\end{equation}
\end{lemma}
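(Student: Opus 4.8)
\noindent The plan is to read \eqref{L1} as the dynamic programming principle for the time--changed control problem \eqref{dynf}--\eqref{costf} and to prove it by the two opposite inequalities, in the same spirit as the proof of \eqref{HLf}; the only genuinely new ingredient is a \emph{restart} of the dynamics at the intermediate level $s$. For ``$\ge$'', fix $\a\in\cA$ with trajectory $X$, $X(t)=x$, and split $\int_0^tL(\a)\,dE_r=\int_s^tL(\a)\,dE_r+\int_0^sL(\a)\,dE_r$. Applying Jensen's inequality to the convex function $L$ with respect to the probability measure $dE_r/(E_t-E_s)$ on $[s,t]$ (exactly as in the reverse inequality of the proof of \eqref{HLf}), and using $\int_s^t\a(r)\,dE_r=x-X(s)$, one obtains
\begin{equation*}
\int_s^tL(\a(r))\,dE_r\ \ge\ (E_t-E_s)\,L\!\left(\frac{x-X(s)}{E_t-E_s}\right).
\end{equation*}
Conditioning on the information $\mathcal G_s$ available at level $s$ then gives
\begin{equation*}
J_\b(x,t,\a)\ \ge\ \E_{x,t}\!\left[(E_t-E_s)L\!\left(\frac{x-X(s)}{E_t-E_s}\right)+\E_{x,t}\!\left[\int_0^sL(\a)\,dE_r+g(X(0))\,\Big|\,\mathcal G_s\right]\right],
\end{equation*}
and, since conditionally on $X(s)$ the inner expectation is the cost of a control for the problem started at $(X(s),s)$, it is bounded below by $u_\b(X(s),s)$. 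Estimating $(E_t-E_s)L(\frac{x-X(s)}{E_t-E_s})+u_\b(X(s),s)$ from below by the minimum over $y\in\R^d$ and taking the infimum over $\a\in\cA$ yields ``$\ge$''.

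For ``$\le$'', as in \eqref{id} pick a measurable selection $Y:\O\to\R^d$ of the minimum on the right--hand side of \eqref{L1}; it is well defined because $y\mapsto(E_t-E_s)L(\frac{x-y}{E_t-E_s})+u_\b(y,s)$ is continuous and coercive by \eqref{hyp_H}--\eqref{hyp_g}. On $[s,t]$ use the constant control $\a(r)=(x-Y)/(E_t-E_s)$, so that $X(r)=x-\frac{x-Y}{E_t-E_s}(E_t-E_r)$; hence $X(t)=x$, $X(s)=Y$, and the cost accumulated on $[s,t]$ is exactly $(E_t-E_s)L(\frac{x-Y}{E_t-E_s})$. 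On $[0,s]$, for $\e>0$, use an $\e$--optimal control for $u_\b(Y,s)$, selected measurably in the initial point. The concatenation of the two pieces belongs to $\cA$ and, conditionally on $Y=X(s)$, has cost at most $(E_t-E_s)L(\frac{x-Y}{E_t-E_s})+u_\b(Y,s)+\e$; taking expectations and letting $\e\to0$ gives ``$\le$'' and hence \eqref{L1}.

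The hard part is the restart invoked in both inequalities: the identification, given the state at level $s$, of the conditional cost--to--go over $[0,s]$ with the value $u_\b(\cdot,s)$. This is immediate for Markov dynamics but is delicate here, because $E_t$ has neither stationary nor independent increments (and for the same reason one cannot simply derive \eqref{L1} from the identity $u_\b(x,t)=\E_{x,t}[u(x,E_t)]$ and the classical Lax--Oleinik semigroup). The appropriate framework is to take $\mathcal G_s=\mathcal F_{E_s}$, where $\{\mathcal F_\t\}$ is the natural filtration of the stable subordinator $D_\t$, and to invoke the strong Markov property of $D_\t$ at the first--passage time $E_s$, keeping track of the overshoot $D_{E_s}-s$; together with the measurable selection of the $\e$--optimal controls, this is what makes the two displays above rigorous, all the remaining manipulations being the same straight--line and Jensen bookkeeping already used for \eqref{HLf}.
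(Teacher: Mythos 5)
There is a genuine gap, and it is exactly at the point you flag as ``the hard part'': the restart at level $s$ is not delivered by the strong Markov property of $D_\tau$ at the passage time $E_s$, and without it both of your inequalities are unproven. Take $\mathcal{G}_s=\mathcal{F}_{E_s}$ as you propose: then the whole past $(E_r)_{r\le s}$ is $\mathcal{G}_s$-measurable, so in the ``$\ge$'' direction the inner conditional expectation $\E_{x,t}\bigl[\int_0^s L(\a)\,dE_r+g(X(0))\,\big|\,\mathcal{G}_s\bigr]$ is (essentially) the pathwise cost itself, while $u_\b(X(s),s)$ is an infimum of \emph{unconditional} expectations over the law of $(E_r)_{r\le s}$; there is no reason the former dominates the latter pathwise. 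The strong Markov property at $E_s$ (even tracking the overshoot $D_{E_s}-s$) only renews the \emph{future} of $D$, i.e.\ the increments $(E_r-E_s)_{r>s}$; it says nothing useful about the segment $[0,s]$ whose cost you want to identify with $u_\b(\cdot,s)$. The same problem undermines ``$\le$'': your selected point $Y$ is built from $E_s,E_t$, hence is correlated with the past of the time change, so a control that is $\e$-optimal for $u_\b(Y,s)$ (a value computed with the unconditional law of $(E_r)_{r\le s}$) need not have conditional cost $\le u_\b(Y,s)+\e$ inside the big expectation. This is precisely the non-Markovianity obstruction you mention, and your proposal gestures at it rather than overcoming it; indeed, identity \eqref{L1} is not the naive dynamic programming principle (note that $u_\b(y,s)$, an averaged quantity, sits \emph{inside} the pathwise minimum), so a general conditioning/restart argument is the wrong tool here.

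For contrast, the paper never conditions at all: it proves \eqref{L1} directly from the representation \eqref{HLf} by pathwise convexity bookkeeping on the same probability space. For ``$\le$'' one picks, for each fixed $y$, a random minimizer $Z$ realizing $u_\b(y,s)=\E_{x,t}[E_sL(\frac{y-Z}{E_s})+g(Z)]$ and uses the identity
\begin{equation*}
\frac{x-Z}{E_t}=\Bigl(1-\frac{E_s}{E_t}\Bigr)\frac{x-y}{E_t-E_s}+\frac{E_s}{E_t}\,\frac{y-Z}{E_s}
\end{equation*}
together with convexity of $L$; for ``$\ge$'' one takes a minimizer $W$ for $u_\b(x,t)$ and chooses the random intermediate point $Y=\frac{E_s}{E_t}x+(1-\frac{E_s}{E_t})W$, for which the three difference quotients coincide. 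If you want to salvage a control-theoretic proof, you would have to formulate and justify a dynamic programming principle for the time-changed (non-Markovian) problem from scratch, which is a substantially harder task than the statement itself; the pathwise argument above is the intended route.
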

\begin{proof}
For $y\in \R^d$, let $Z:\O\to \R^d$ be a r.v. such that
\[u_\b(y,s)=\E_{x,t}\left[E_sL\left(\frac{y-Z}{E_s}\right)+g(Z)\right]\]%  \int_0^t sL\left(\frac{z-y}{r}\right)h_\b(r,s)dr+g(z).\]
By the identity
\[\frac{x-Z}{E_t}=\left(1-\frac{E_s}{E_t}\right)\frac{x-y}{E_t-E_s}+ \frac{E_s}{E_t}\frac{y-Z}{E_s}\]
and  by the convexity of $L$, we get
\[
L\left(\frac{x-Z}{E_t}\right)\le \left(1-\frac{E_s}{E_t}\right)L\left(\frac{x-y}{E_t-E_s}\right)+ \frac{E_s}{E_t}L\left(\frac{y-Z}{E_s}\right).
\]
Therefore
\begin{align*}
     u_\b(x,t)&=  \E_{x,t}\left[\min_{y\in\R^d}\left\{  E_t   L\left(\frac{x-y}{E_t}\right)+g(y)\right\}\right]\le \E_{x,t}\left[ E_t L\left(\frac{x-Z}{E_t}\right)+g(Z)\right]\\
            &\le  \E_{x,t}\left[ (E_t-E_s)L\left(\frac{x-y}{E_t-E_s}\right)+E_sL\left(\frac{y-Z}{E_s}\right) +g(Z)\right]\\
            &=\E_{x,t}\left[ (E_t-E_s)L\left(\frac{x-y}{E_t-E_s}\right)+u_\b(y,s)\right].
     \end{align*}
Since the previous inequality holds for any $y\in\R^d$, we get
\[
 u_\b(x,t)\le \E_{x,t}\left[ \min_{y\in\R^d}\left\{ (E_t-E_s)L\left(\frac{x-y}{E_t-E_s}\right)+u_\b(y,s)\right\}\right].
\]
To prove the reverse inequality, let $W$ be a r.v. such that
\[
u_\b(x,t)=\E_{x,t}\left[ \min_{y\in\R^d}\left\{  E_t   L\left(\frac{x-y}{E_t}\right)+g(y)\right\}\right]=\E_{x,t}\left[ E_t   L\left(\frac{x-W}{E_t}\right)+g(W)\right].\]
If $Y:\Omega\to\R^d$ is a r.v., since
\[\E_{x,t}\left[u_\b(Y ,s)\right]\le\E_{x,t}\left[ E _s  L\left(\frac{Y -W}{E_s}\right)+g(W)\right], \]
it follows that
\begin{equation}\label{L4}
 u_\b(x,t)\ge \E_{x,t}\left[ E _t   L\left(\frac{x-W}{E_t}\right)-E _s  L\left(\frac{Y-W}{E_s}\right)+u_\b(Y,s) \right].
\end{equation}
Set $Y=\frac{E_s}{E_t}x+ (1-\frac{E_s}{E_t})W$. Then $\frac{x-Y}{E_t-E_s}=\frac{x-W}{E_t}=\frac{Y-W}{E_s}$ and by
\eqref{L4}
\begin{align*}
    u_\b(x,t)&\ge \E_{x,t}\left[ ( E_t -E_s)  L\left(\frac{x-Y}{E_t-E_s}\right)+u_\b(Y,s)\right]\\
    &\ge \E_{x,t}\left[  \min_{y\in\R^d}\left\{( E_t -E_s)  L\left(\frac{x-y}{E_t-E_s}\right)+u_\b(y,s)\right\}\right].
\end{align*}
\end{proof}
%%%%%%
\begin{remark}
Arguing as in Lemma \ref{ppd}, it is also possible to prove that if $\t:\O\to [0,t)$ is a stopping time, then
\begin{equation}\label{L1bis}
   u_\b(x,t)=\E_{x,t}\left[ \min_{y\in\R^d}\left\{( E_t -E_\t)  L\left(\frac{x-y}{E_t-E_\t}\right)+u_\b(y,\t)\right\}\right].
\end{equation}
\end{remark}
%%%%%%
%            %
%%%%%%
\begin{proposition}\label{regularity}
We have
\begin{enumerate}
 \item[(i)] For all $x$, $\ov x\in\R^d$,  $t\in (0,\infty)$
  \begin{equation}\label{lip_x}
    |u_\b(x,t)-u_\b(\ov x,t)|\le L_g |x-\ov x|,
  \end{equation}
  where $L_g$ is the Lipschitz constant of the initial datum $g$.
\item[(ii)] There exists a constant $C$ such  that for all   $x \in\R^d$,  $t\in (0,\infty)$
  \begin{equation}\label{est_g}
    |u_\b(x,t)-g(x)|\le C t^\b.
  \end{equation}
\item[(iii)] There exists a constant $C$ such  that for all   $x \in\R^d$,  $t$, $\ov t\in (0,\infty)$, $\ov t< t$,
\begin{equation}\label{lip_t}
     |u_\b(x,t)-u_\b(x,\ov t)|\le C (t-\ov t)^\b.
\end{equation}
\end{enumerate}
\end{proposition}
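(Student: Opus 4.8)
The plan is to obtain all three bounds from the Hopf-Lax representation \eqref{HLf} (and, for the time-regularity, from its dynamic-programming refinement Lemma \ref{ppd}) by carrying out elementary convex-analytic estimates $\o$ by $\o$ inside the expectation and averaging only at the end; the moment formula \eqref{moment} is what turns bounds linear in $E_t$ into bounds of order $t^\b$.

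For (i) I would transport the classical Hopf-Lax translation argument under the expectation. Fixing $\o$, let $Y=Y(\o)$ realize the minimum in the integrand of \eqref{HLf} at $x$; testing the minimization at $\ov x$ with the competitor $y=Y+(\ov x-x)$ and using $g(Y+\ov x-x)\le g(Y)+L_g|\ov x-x|$ shows that, pointwise in $\o$, the integrand at $\ov x$ is at most the integrand at $x$ plus $L_g|\ov x-x|$; taking expectations and then interchanging $x$ and $\ov x$ gives \eqref{lip_x}. The point to retain is that this Lipschitz constant is \emph{exactly} $L_g$, with no dependence on $t$.

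The engine for (ii) and (iii) is a single pointwise estimate: for an $L_\phi$-Lipschitz function $\phi:\R^d\to\R$ and $\s\ge 0$, setting $F_\phi(x,\s):=\min_{y\in\R^d}\{\s L(\frac{x-y}{\s})+\phi(y)\}$ (with $F_\phi(x,0):=\phi(x)$; the minimum is attained because $L$ is superlinear), one has
\[
|F_\phi(x,\s)-\phi(x)|\le C(L_\phi)\,\s,\qquad C(L_\phi):=\max\Big\{\,|L(0)|,\ -\inf_{z\in\R^d}\big(L(z)-L_\phi|z|\big)\Big\}<\infty .
\]
The upper bound is the competitor $y=x$; the lower bound follows from picking a minimizer $Y$, writing $w=(x-Y)/\s$ so that $|x-Y|=\s|w|$, and using $\s L(w)-L_\phi|x-Y|=\s\big(L(w)-L_\phi|w|\big)\ge\s\inf_z\big(L(z)-L_\phi|z|\big)$, the infimum being finite precisely because $L$ is superlinear and continuous. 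Then (ii) is immediate: $u_\b(x,t)=\E[F_g(x,E_t)]$ by \eqref{HLf}, so the estimate with $\phi=g$ and $\s=E_t$, combined with \eqref{moment} for $\a=1$, gives $|u_\b(x,t)-g(x)|\le C(L_g)\,\E[E_t]=C(L_g)\,C(1,\b)\,t^\b$.

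For (iii), given $0<\ov t<t$, Lemma \ref{ppd} with $s=\ov t$ reads $u_\b(x,t)=\E\big[F_{u_\b(\cdot,\ov t)}(x,E_t-E_{\ov t})\big]$. By (i) the map $u_\b(\cdot,\ov t)$ is $L_g$-Lipschitz with a constant independent of $\ov t$, so the pointwise estimate applies with the \emph{same} constant $C(L_g)$ and, after taking expectations, $|u_\b(x,t)-u_\b(x,\ov t)|\le C(L_g)\,\E[E_t-E_{\ov t}]$; finally $\E[E_t-E_{\ov t}]=C(1,\b)(t^\b-\ov t^\b)\le C(1,\b)(t-\ov t)^\b$ by \eqref{moment} and the subadditivity of $s\mapsto s^\b$ on $[0,\infty)$, which is \eqref{lip_t}. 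I expect the only delicate points to be keeping the Lipschitz constant in (i) independent of $t$ (this uniformity is exactly what makes the constant in (iii) time-independent) and the lower bounds in the pointwise estimate, where the superlinearity of $L$ is essential for $\inf_z(L(z)-L_\phi|z|)$ to be finite; the fact that $E_t-E_{\ov t}$ may vanish on an event of positive probability is harmless, since there $F_{u_\b(\cdot,\ov t)}(x,0)=u_\b(x,\ov t)$ and the bound holds trivially.
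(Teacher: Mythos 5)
Your proposal is correct and follows essentially the same route as the paper: the translation competitor $y=Z+(\ov x-x)$ for (i), the competitor $y=x$ plus the superlinearity bound $\sup_z\{L_g|z|-L(z)\}=\max_{|w|\le L_g}H(w)$ together with \eqref{moment} for (ii), and Lemma \ref{ppd} with $s=\ov t$, the $t$-uniform Lipschitz constant $L_g$ from (i), and the subadditivity of $s\mapsto s^\b$ for (iii). Your only departure is cosmetic: you package the two-sided estimate as a single pointwise lemma for $F_\phi(x,\s)$ applied with $\phi=g$ and $\phi=u_\b(\cdot,\ov t)$, whereas the paper writes the computations out separately inside the expectations.
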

\begin{proof}
Fixed $t>0$,  $x$, $\ov x\in\R^d$,  let $Z:\O\to \R^d$ be a r.v. such that
\[E_tL\left(\frac{x-Z}{E_t}\right)+g(Z)=\min_{y\in\R^d}\left\{E_tL\left(\frac{x-y}{E_t}\right)+g(y)\right\}.\]
Then
\begin{align*}
    u_\b(\ov x,t)-u_\b(x,t)&=\E_{x,t}\left[  \min_{y\in\R^d}\left\{E_t  L\left(\frac{\ov x-y}{E_t}\right)+g(y)\right\}\right]-\E_{x,t}\left[E_tL\left(\frac{x-Z}{E_t}\right)+g(Z)\right]  \\
    &\le \E_{x,t}\left[ E_tL\left(\frac{x-Z}{E_t}\right)+g(\ov x-x+Z) \right]-\E_{x,t}\left[E_tL\left(\frac{x-Z}{E_t}\right)+g(Z)\right]\\
    &\le  \E_{x,t}\left[g(\ov x-x+Z)-g(Z)\right]\le L_g|\ov x-x|.
\end{align*}
Exchanging the role of $x$, $\ov x$, we get \eqref{lip_x}.\\
Fix $x\in\R^d$ and $t>0$.
 We recall that    for any $t, \gamma > 0$, the  $\gamma$-moment of $E_t$  is given by
%\begin{equation}\label{gmoment}
%\mathbb{E}(E_t^\gamma) = C(\beta, \gamma)t^{\beta\gamma}
%\end{equation}
% for some positive  constant $C(\beta, \gamma)$ (see \eqref{moment}).
Then, setting $y=x$ in the RHS of \eqref{HLf} and recalling \eqref{moment}, we get
\begin{equation}\label{g1}
    u_\b(x,t) \le \E_{x,t}\left[  E_t  L(0)+g(x)\right]=L(0)\E_{x,t}[E_t]+ g(x)=L(0)c(\b,1)t^\b+g(x).
\end{equation}
To get the other inequality in \eqref{est_g}, we observe that
\begin{align}
    u_\b(x,t)&-g(x)=\E_{x,t}\left[  \min_{y\in\R^d}\left\{E_t  L\left(\frac{x-y}{E_t}\right)+g(y)-g(x)\right\}\right]\nonumber\\
                         &\ge \E_{x,t}\left[  \min_{y\in\R^d}\left\{E_t  L\left(\frac{x-y}{E_t}\right)-L_g|x-y|\right\}\right]
                         =- \E_{x,t}\left[E_t \max_{z\in\R^d}\left\{-L(z)+L_g|z|\right\}\right]\label{L7}\\
                         &=\E_{x,t}\left[ -E_t\max_{|w|\le L_g}\max_{z\in\R^d}\left\{|w|-L(w)\right\}\right]
                         =-\max_{|w|\le L_g}\{H(w)\}\E_{x,t}[E_t].\nonumber%=-\max_{|w|\le L_g}\{H(w)\}c(\b,1)t^\b
\end{align}
By \eqref{moment} and \eqref{g1}, we get \eqref{est_g}.\\
To prove \eqref{lip_t}, fix $x\in\R^d$ and $0<\ov t<t$. Setting $y=x$, $s=\ov t$ in the RHS of \eqref{L1} and recalling \eqref{moment}, we get
\begin{align*}
    u_\b(x,t) &\le \E_{x,t}\left[  (E_t -E_{\ov t})L(0)+u_\b(x,\ov t)\right]=L(0)\E_{x,t}[E_t-E_{\ov t}] +u_\b(x,\ov t) \\
    &=L(0)c(\b,1)(t^\b-{\ov t}^\b)+u_\b(x,\ov t) \le L(0)c(\b,1)(t-\ov t)^\b+u_\b(x,\ov t).
\end{align*}
On the other side, by \eqref{L1} with $s={\ov t}$ and \eqref{lip_x}, arguing as in \eqref{L7}, we have
\begin{align*}
    u_\b(x,t) -u_\b(x,\ov t) &=\E_{x,t}\left[ \min_{y\in\R^d}\left\{( E_t -E_{\ov t})  L\left(\frac{x-y}{E_t-E_{\ov t}}\right)+u_\b(y,{\ov t})
     -u_\b(x,\ov t)\right\}\right]\\
                  &\ge \E_{x,t}\left[ \min_{y\in\R^d}\left\{( E_t -E_{\ov t})  L\left(\frac{x-y}{E_t-E_{\ov t}}\right)-L_g|x-y|\right\}\right]\\
                  &=\E_{x,t}\left[ -(E_t-E_{\ov t})\max_{|w|\le L_g}\max_{z\in\R^d}\left\{|w|-L(w)\right\}\right]\\
                  &=-\max_{|w|\le L_g}\{H(w)\}\E_{x,t}[E_t-E_{\ov t}]\ge -\max_{|w|\le L_g}\{H(w)\}c(\b,1)(t-\ov t)^\b.
\end{align*}
\end{proof}
%%%%%%%%%
%                        %
%%%%%%%%%
\section{The fractional Hamilton-Jacobi equation}\label{sec:HJf}
We exploit the results of the previous section to show that the function $u_\b$ given by \eqref{HLf} is a solution of the Cauchy problem \eqref{HJf}.
\begin{definition}
A function $v\in C^0(\overline Q)$ is said to be    a.e. subsolution  of \eqref{HJf}  is $Dv\in L^\infty (Q)$, $\pd_t v(\cdot,x)\in L^1_{\textrm{loc}} (0,\infty)$  for all $x\in\R^d$ and
\begin{eqnarray}
     &  \fdz v+H(Dv)\le 0\quad  &\text{a.e. in $Q$},\label{subHJf}\\
  &   v(x,0)\le g(x)&x\in\R^d.\label{subIC}
    \end{eqnarray}
\end{definition}
Observe that, since    $\pd_t v(\cdot,x)\in L^1_{\textrm{loc}} (0,\infty)$  for all $x\in\R^d$, the fractional derivative $\fdz v(x,t)$ is well defined for any $(x,t)\in Q$.
%%%
%%%
\begin{proposition}\label{sol}
 The    function $u_\b$    is the maximal a.e. subsolution of \eqref{HJf}. In addition,  $u_\b$ is   a.e. solution of
 \eqref{HJf} in $Q$.
\end{proposition}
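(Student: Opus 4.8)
My plan is to extract the subsolution property and the maximality from the probabilistic representation of $u_\b$, and the reverse inequality from the dynamic programming principle of Lemma \ref{ppd}. First I would record an explicit formula for $\fdz u_\b$. Let $u$ denote the classical Hopf--Lax solution \eqref{HL}; under \eqref{hyp_H}--\eqref{hyp_g} it is Lipschitz in $(x,t)$, differentiable a.e., and an a.e.\ solution of \eqref{HJ}, so $\pd_s u(x,s)=-H(Du(x,s))$ for a.e.\ $(x,s)$. Since the inner minimum in \eqref{HLf} equals $u(x,E_t)$, that formula reads $u_\b(x,t)=\int_0^\infty u(x,s)\,\cE_\b(s,t)\,ds$. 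Writing $u(x,s)=g(x)+\int_0^s\pd_\sigma u(x,\sigma)\,d\sigma$ and using Fubini with $\Psi(\sigma,t):=\int_\sigma^\infty\cE_\b(s,t)\,ds$ gives $u_\b(x,t)=g(x)+\int_0^\infty\pd_\sigma u(x,\sigma)\,\Psi(\sigma,t)\,d\sigma$. Integrating the transport equation \eqref{eq_E} in $s$ over $(\sigma,\infty)$ yields $\fdz\Psi(\sigma,\cdot)=\cE_\b(\sigma,\cdot)$; since $|\pd_\sigma u|=|H(Du)|\le C_H$ (as $|Du|\le L_g$), one may interchange $\fdz$ with $\int_0^\infty d\sigma$ and obtain, for a.e.\ $x$ and all $t>0$,
\begin{equation}\label{eq:keysol}
\fdz u_\b(x,t)=\int_0^\infty\pd_\sigma u(x,\sigma)\,\cE_\b(\sigma,t)\,d\sigma=-\int_0^\infty H\big(Du(x,\sigma)\big)\,\cE_\b(\sigma,t)\,d\sigma .
\end{equation}
In particular $Du_\b\in L^\infty(Q)$ by \eqref{lip_x} and $\pd_tu_\b(\cdot,x)\in L^1_{\mathrm{loc}}(0,\infty)$ by \eqref{lip_t}, so $\fdz u_\b$ is well defined. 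Since $\cE_\b(\cdot,t)$ is a probability density and $Du_\b(x,t)=\int_0^\infty Du(x,\sigma)\,\cE_\b(\sigma,t)\,d\sigma$, Jensen's inequality and the convexity of $H$ give $H(Du_\b(x,t))\le\int_0^\infty H(Du(x,\sigma))\,\cE_\b(\sigma,t)\,d\sigma=-\fdz u_\b(x,t)$; together with $u_\b(x,0)=g(x)$ (by \eqref{est_g}) this shows $u_\b$ is an a.e.\ subsolution.

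For maximality, let $v$ be any a.e.\ subsolution. From $\fdz v+H(Dv)\le0$ and $H(Dv)\ge Dv\cdot a-L(a)$ we get $\fdz v+Dv\cdot a\le L(a)$ a.e., for every $a\in\R^d$, and mollifying in $x$ preserves this (again by convexity of $H$). Fixing $(x,t)$ and $a\in\cA$, with $X$ the trajectory of \eqref{dynf}, a fractional It\^o--Dynkin formula along the time-changed flow --- using that $X(s)=x(E_s)$ with $x(\cdot)$ the classical straight-line flow, so that the chain rule for the mollified (hence $C^1$) $v$ composed with the time change $E$ applies --- would give $v(x,t)\le\E_{x,t}\!\big[\int_0^tL(\bar a(s))\,dE_s+v(X(0),0)\big]\le\E_{x,t}\!\big[\int_0^tL(\bar a(s))\,dE_s+g(X(0))\big]$ by \eqref{subIC}. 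Taking the infimum over $a\in\cA$ and invoking the Hopf--Lax representation \eqref{HLf} of $u_\b$ then yields $v\le u_\b$ on $\ov Q$.

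Finally, the a.e.\ solution claim amounts to the reverse inequality $\fdz u_\b+H(Du_\b)\ge0$ a.e. I would fix $(x,t)$ with $u_\b(\cdot,t)$ differentiable at $x$ and $t$ a Lebesgue point of $\pd_tu_\b(x,\cdot)$, apply Lemma \ref{ppd}, and let $Y_s$ be the minimiser in \eqref{L1}; by \eqref{lip_x} the optimal velocity $Z_s:=(x-Y_s)/(E_t-E_s)$ stays in a fixed compact set, so $Y_s=x-(E_t-E_s)Z_s\to x$ a.s.\ as $s\uparrow t$. Expanding $u_\b(Y_s,s)$ and using $L(Z_s)-Z_s\cdot Du_\b(x,s)\ge-H(Du_\b(x,s))$ gives
\[u_\b(x,t)-u_\b(x,s)=\E_{x,t}\big[(E_t-E_s)\big(L(Z_s)-Z_s\cdot Du_\b(x,s)\big)\big]+o\big(\E[E_t-E_s]\big)\ \ge\ -H(Du_\b(x,s))\,\E[E_t-E_s]+o(\cdot),\]
and inserting conversely a deterministic velocity in \eqref{L1} gives the matching upper bound; one would then use \eqref{moment} to pin down $\pd_tu_\b(x,t)$ and re-insert this into the Caputo integral to recover $\fdz u_\b(x,t)+H(Du_\b(x,t))\ge0$. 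I expect this to be the main obstacle: the dynamic programming principle controls $u_\b(x,\cdot)$ only \emph{locally} near $t$, whereas $\fdz u_\b(x,t)$ is a \emph{nonlocal} weighted average of $\pd_\tau u_\b(x,\tau)$ over $(0,t)$, so closing the gap requires the moment asymptotics \eqref{moment} of the increments $E_t-E_s$, a uniform-in-$\omega$ control of the Taylor remainder above (delicate, since $E$ has heavy-tailed increments relative to their mean), and the $C^1$-in-$t$ regularity of $u_\b$ on $(0,\infty)$ furnished by \eqref{eq:keysol}.
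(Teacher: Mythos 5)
Your proof of the subsolution property is correct in substance and follows a genuinely different route from the paper's. The paper argues dynamically: the stopping-time form \eqref{L1bis} of the programming principle, Kobayashi's It\^o formula for \eqref{dynf}, and the equation \eqref{eq_E} for the density to convert $dE_s$-integrals into fractional integrals, arriving first at the Riemann--Liouville inequality \eqref{HJrl} and then at \eqref{HJc} after applying $\fIz$. You instead start from the subordination formula \eqref{HLfbis} (which the paper uses only in the numerical section), apply the transfer identity $\fdz\,\E[f(E_t)]=\E[f'(E_t)]$ for Lipschitz $f$ (a consequence of the Laplace transform $\int_0^\infty e^{-\l t}\cE_\b(s,t)\,dt=\l^{\b-1}e^{-s\l^\b}$, or of \eqref{eq_E}), and conclude by Jensen; the interchanges you flag are routine since the difference quotients are bounded by $L_g$ and $|\pd_s u|\le\max_{|p|\le L_g}|H(p)|$. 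This is shorter and avoids the stopping time and the delta-function manipulations.

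The maximality part, however, has a genuine gap exactly at the step you summarize as ``a fractional It\^o--Dynkin formula \dots would give''. It\^o's formula for the time-changed process yields $v(x,t)-v(X(0),0)=\int_0^t\pd_s v(X(s),s)\,ds+\int_0^t Dv(X(s),s)\cdot\a(s)\,dE_s$, i.e.\ it produces the \emph{ordinary} time derivative, and the pointwise inequality $\fdz v+Dv\cdot a\le L(a)$ cannot be inserted along the path because the Caputo operator at time $s$ involves the whole history $(0,s]$ --- precisely the obstruction stressed in Remark \ref{visco}. The paper supplies the missing conversion: it rewrites $\E_{x,t}\left[\int_0^t H(Dv(X(s),s))\,dE_s\right]$ as $\E_{x,t}\left[\fIz[H(Dv(X(\cdot),\cdot))]\right]$ using the density $\cE_\b$ and \eqref{eq_E} (leading to \eqref{sol15}), and then invokes the fractionally integrated form $\int_0^t\pd_s v(x,s)\,ds+\fIz[H(Dv(x,\cdot))]\le0$ of the subsolution inequality; moreover the regularization must be done in $(x,t)$ as in Lemma \ref{reg_sub} (checking that mollification commutes with $\fdz$), not only in $x$, to justify the It\^o step. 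Without these ingredients your verification inequality is asserted rather than proved.

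For the a.e.-solution claim you have, as you yourself acknowledge, no complete argument, and the direct local expansion of \eqref{L1} cannot close it, since $\fdz u_\b(x,t)$ averages $\pd_\tau u_\b(x,\cdot)$ over all of $(0,t)$. The idea you are missing from the paper is to deduce this part from maximality: if $\fdz u_\b+H(Du_\b)\le-2\d$ a.e.\ on a box around $(x_0,t_0)$, add to $u_\b$ the perturbation $\frac{\d}{C_\b}\phi(t)$, with $\phi$ a tent function and $C_\b$ a bound for $\fdz\phi$, and contradict maximality. Be aware, though, that your own identity puts this part under strain: combined with $Du_\b(x,t)=\E[Du(x,E_t)]$ it gives $\fdz u_\b(x,t)+H(Du_\b(x,t))=H\left(\E[Du(x,E_t)]\right)-\E\left[H(Du(x,E_t))\right]$, which for strictly convex $H$ is strictly negative wherever $Du(x,\cdot)$ genuinely depends on time; for instance, for $H(p)=|p|^2$ and $g(x)=\min\{|x|,1\}$ one finds $Du(x,s)=\min\{1,x/(2s)\}$ for $0<x<1$, so the above quantity equals $-\mathrm{Var}\left(\min\{1,x/(2E_t)\}\right)<0$ on an open set. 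Correspondingly, the paper's perturbation does not depend on $x$ and is only checked to be a subsolution inside the box, not at far-away points where no strict inequality is available. So the gap in your proposal for this last part is real, but it is shared with the paper, and before trying to repair it you should decide whether the a.e.\ equality can hold at all for strictly convex Hamiltonians.
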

\begin{proof}
By Proposition \ref{regularity}, we have that  $Du_\b\in L^\infty(Q)$, $\pd_tu_\b(\cdot,x)\in L^1_{\textrm{loc}}
(0,\infty)$  for all $x\in\R^d$. Given $(x,t)\in\R^d\times(0,\infty)$ such that $Du(x,t)$ exists, we  prove that for any $q\in\R^d$,
\begin{equation}\label{claim1}
    \fdz u_\b(x,t)  + Du_\b(x,t)\cdot q -L(q)\le 0.
\end{equation}
Indeed, fix $q\in\R^d$ and $h>0$. Consider  the  control law $\a(s)\equiv q$. Then the solution $X(s)$ of \eqref{dynf}  is given by $X(s)=x-(E_t-E_s)q$.
Define the stopping time
\[\tgh=\sup\{ s\in (t-h,t):\, |X(s)-x|=h\}.\]
By \eqref{L1bis},  for $\t=\t_h$ and $y=X(\tgh)$, we have
\[
u_\b(x,t)\le \E_{x,t}\left[ (E_t-E_\tgh)L\left(\frac{x-X(\tgh)}{ E_t-E_\tgh } \right)+u_\b(X(\tgh),\tgh)  \right].
\]
Since $q=(x-X(\tgh))/(E_t-E_\tgh)$,
\begin{equation}\label{sol1}
 \E_{x,t}\left[u_\b(X(t),t)-u_\b(X(\tgh),\tgh)\right]\le L(q)\ \E_{x,t}\left[E_t-E_\tgh\right]
\end{equation}
By Ito's formula \cite{k}, we also have
\begin{align}
    & \E_{x,t}\left[u_\b(X(t),t)-u_\b(X(\tgh),\tgh)\right]= \E_{x,t}\left[\int_\tgh^t du_\b(X(s),s)\right]\nonumber\\
    & \E_{x,t}\left[\int_\tgh^t \pd_s u_\b(X(s),s)ds+\int_\tgh^tDu_\b(X(s),s)dX(s)\right]\label{sol1a}\\
    & =   \E_{x,t}\left[\int_\tgh^t \pd_s u_\b(X(s),s)ds+\int_\tgh^tDu_\b(X(s),s)\cdot q\,dE_s \right].\nonumber
\end{align}
Therefore, recalling \eqref{sol1},
\begin{align*}
     \E_{x,t}&\left[\int_\tgh^t \pd_s u_\b(X(s),s)ds+\int_\tgh^t\left(Du_\b(X(s),s)\cdot q-L(q)\right)dE_s \right]
     =\E_{x,t}\left[\int_\tgh^t \pd_s u_\b(X(s),s)ds\right. \\
    & \left. +\int_{0}^{+\infty} \left(\int_0^r \left(Du_\b(Y(s),D_s)\cdot q-L(q)\right)ds\right)(\cE_\b(r,t)-\cE_\b(r,\tgh) ) dr\right]    \le 0\nonumber
\end{align*}
where $D_s$ is the inverse of $E_t$, i.e. $E_{D_s}=s$.
Dividing the previous inequality by $h$ and passing to the limit for $h\to 0^+$, by the Dominated Convergence Theorem we get
\begin{equation}\label{sol2}
    \pd_t u_\b(x,t)+  \E_{x,t}\left[\int_{0}^{+\infty} \left(\int_0^r \left(Du_\b(Y(s),D_s)\cdot q-L(q)\right)ds\right)\pd_t \cE_\b(r,t) dr\right]    \le 0.
\end{equation}
Set $\Phi(r)=\int_0^r (Du_\b(Y(s),D_s)\cdot q-L(q))ds$. Since   $\cE_\b$ is a solution
of \eqref{eq_E}, then  we have (see \cite[Lemma 4.2]{CDM})
\[
\pd_t \cE_\b(r,t)=-\fDz [\pd_r \cE_\b(r,t)] - \delta_0(r)\delta_0(t), \qquad r\in (0,\infty).
\]
where $\fDz$ is      the Riemann-Liouville derivative   of order $1-\b$, which is defined for  a continuous function  $f:[0,t]\to \R$
 by
\[
   \fDz  f(t):=\frac{1}{\G(\b)}\frac{d}{dt} \int_0^t f (\t)\frac{1}{(t-\t)^{1-\b}}d\t.
\]
Therefore
\begin{align}
 \E_{x,t}&\left[\int_{0}^{+\infty}\Phi(r) \pd_t \cE_\b(r,t) dr\right]=-\E_{x,t}\left[\int_{0}^{+\infty}\Phi(r) \fDz \pd_r \cE_\b(r,t)  dr\right]\nonumber\\
             &- \Phi(0)\delta_0(t) =-\E_{x,t}\left[ \fDz\left(\int_{0}^{+\infty}\Phi(r) \pd_r \cE_\b(r,t)dr\right)  \right]\label{sol13}\\
             &=-\E_{x,t}\left[\fDz\left(\left[ \Phi(r)\cE_\b(r,t)\right]_0^{+\infty}-\int_{0}^{+\infty}  \pd_r\Phi(r)  \cE_\b(r,t)dr\right)  \right]\nonumber
 \end{align}
 Since $\lim_{r\to+\infty}\cE_\b(r,t)=0$ and $\pd_r\Phi(r)=Du_\b(Y(r),D_r)\cdot q-L(q)$, we have
 \begin{equation}\label{sol3}
 \begin{split}
  \E_{x,t}&\left[\int_{0}^{+\infty}\Phi(r) \pd_t \cE_\b(r,t) dr\right]= \E_{x,t}\left[\fDz\left(\int_{0}^{+\infty}\cE_\b(r,t)(Du_\b(Y(r),D_r)\cdot q-L(q))dr\right)\right]\\
  =\E_{x,t}&\left[\fDz\left(Du_\b(X(t),t)\cdot q-L(q)\right)\right]=\fDz\left(Du_\b(x,t)\cdot q-L(q)\right).
  \end{split}
 \end{equation}
 Replacing \eqref{sol3} in \eqref{sol2}, we get
 \begin{equation}\label{HJrl}
\pd_t u_\b(x,t)+\fDz\left(Du_\b(x,t)\cdot q-L(q)\right)\le 0.
 \end{equation}
 Applying the fractional integral $\fIz\cdot= \frac{1}{\G(\b)}\int_0^t \frac{\cdot}{(t-\t)^{1-\b}}d\t$ to the previous equation, we finally get
\begin{equation}\label{HJc}
 \fdz u_\b (x,t)+Du_\b(x,t)\cdot q-L(q)\le 0,
\end{equation}
 hence the claim \eqref{claim1}. It follows that   $u_\b$ is an a.e. subsolution of \eqref{HJf}. \par
 %%%%%%%%%%Maximal subsolution%%%%%%%%%%%%%%
 We now prove that $u_\b$ is the maximal a.e. subsolution.
 Assume by contradiction  that there exist   $(x,t)\in Q$, $\e>0$ and  a a.e. subsolution $v$  of \eqref{HJf}   such that
 \begin{equation}\label{contrad}
    u_\b(x,t)\le v(x,t)-2\e.
 \end{equation}
It is not restrictive to assume that that $v\in C^1(Q)$ (see   Lemma \ref{reg_sub} at the end of the proof).
Let $\a$ be  a $\e$-optimal control for $u_\b$, i.e.
\[u_\b(x,t)\ge\E_{x,t}\left\{\int_0^tL(\a(s))dE_s +g(X(0))\right\}-\e,\]
where $X(s)$ is given by the solution of \eqref{dynf} corresponding to $\a$. By Ito's formula
\begin{align}
v&(x,t)=     \E_{x,t}\left[v(X(t),t)\right]= \E_{x,t}\left[v(X(0),0)+\int_0^t dv(X(s),s)\right]\nonumber\\
    & \E_{x,t}\left[v(X(0),0)+\int_0^t \pd_s v(X(s),s)ds+\int_0^tDv(X(s),s)\cdot \a(s)dE_s\right]\nonumber\\
    & \le   \E_{x,t}\left[g(X(0))+\int_0^t \pd_s v(X(s),s)ds+\int_0^t\big(L(\a(s))+H(Dv(X(s),s))\big)\,dE_s \right]\label{sol1a}\nonumber\\
    &\le   u_\b(x,t)+\e+\E_{x,t}\left[\int_0^t \pd_s v(X(s),s)ds+\int_0^t H(Dv(X(s),s)) \,dE_s\right]\nonumber\\
    &=u_\b(x,t)+\e+\E_{x,t}\left[\int_0^t \pd_s v(X(s),s)ds+\int_0^\infty\left(\int_0^r H(Dv(Y(s),D_s))ds\right)(\cE_\b(r,t)-\cE_\b(r,0))dr\right]\nonumber\\
   & =u_\b(x,t)+\e+\E_{x,t}\left[\int_0^t \pd_s v(X(s),s)ds+\int_0^\infty\Phi(r)(\cE_\b(r,t)-\cE_\b(r,0))dr\right]\nonumber
\end{align}
 where $\Phi(r)=\int_0^r H(Dv(Y(s),D_s))ds$. Integrating \eqref{eq_E} and observing that $\cE_\b(r,0)=0$ for all $r\in [0,+\infty)$, we have
\[
\cE_\b(r,t)=-\fIz(\pd_r\cE_\b(r,\cdot))+\delta_0(r)H(r),
\]
where $\d_0$ and $H$ are the Dirac function at $0$ and the Heaviside function. Performing a computation similar to \eqref{sol13}, we have that
 \begin{align*}
&\E_{x,t}\left[\int_0^\infty\Phi(r)\cE_\b(r,t))dr\right] =-\E_{x,t}\left[\fIz\big(\int_0^\infty\Phi(r)\pd_r\cE_\b(r,\cdot)\big)dr\right]\\
&=\E_{x,t}\left[\fIz\big(\int_0^\infty\pd_r\Phi(r)\cE_\b(r,\cdot)\big)dr\right]=\E_{x,t}\left[\fIz\left(\int_0^\infty H(Dv(Y(r),D_r))\cE_\b(r,\cdot)dr\right)\right]\\
&=\E_{x,t}\left[   \fIz[H(D v(X(\cdot),\cdot))]\right].
\end{align*}
Replacing the previous identity in \eqref{sol1a}, we finally get that
 \begin{equation}\label{sol15}
 v(x,t)\le u_\b(x,t)+\e+\E_{x,t}\left[\int_0^t \pd_s v(X(s),s)ds+\fIz[H(D v(X(\cdot),\cdot))] \right].
 \end{equation}
Since   $v$ is a  $C^1$ subsolution of \eqref{HJf}, by applying the operator $\fIz\cdot$ to the equation satisfied  by $v$ we get
\[
\int_0^t\pd_s v(x,s)ds+ \fIz \left[H(D v(x,\cdot))\right]\le 0\qquad \forall (x,t)\in Q.
\]
Replacing the previous inequality in \eqref{sol15}, we get a contradiction to \eqref{contrad}.\par
We finally prove that $u_\b$ satisfies \eqref{HJf} a.e. in $Q$. Assume by contradiction that there exists $(x_0,t_0)\in Q$ and   $\d$, $\e$ positive such that, defined $U=(x_0-\d,x_0+\d)\times(t_0-\e,t_0+\e)$, we have
\begin{equation}\label{contrad1}
    \fdz u_\b+H(Du_\b)\le -2\d<0\qquad \text{a.e. in $U$.}
\end{equation}
Define the function $\phi(t)=(t-(t_0-\e))\chi_{(t_0-\e,t_0]}+((t_0+\e)-t)\chi_{(t_0,t_0+\e)}$ for $t\in\R$, where $\chi_{[a,b]}$ is the characteristic function of the interval $[a,b]$. Set $C_\b=\e^{\b}/\G(2-\b)$ and observe that $\fdz \phi(t)=(t-t_0+\e)^\b/\G(2-\b)$ for $t\in (t_0-\e,t_0]$, $\fdz \phi(t)=C_\b-(t-t_0)^\b/\G(2-\b)$ for
$t\in (t_0,t_0+\e)$ and $\fdz \phi(t)=0$ otherwise. Hence $\fdz \phi(t)\le C_\b$ for all $t\in\R$. Defined $\bar u(x,t)=u_\b(x,t)+\frac{\d}{C_\b}\phi(t)$
for $(x,t)\in Q$, by \eqref{contrad1} we have
\[
\fdz \bar u +H(D\bar u)\le \fdz u_\b+H(Du_\b)+ \d \le -\d\qquad \text{a.e. in $U$.}
\]
Moreover, if $\e$ is small enough in such a way that $t_0-\e>0$, it follows that  $\bar u(x,0)=u_\b(x,0)=g(x)$ and therefore $\bar u$ is an a.e. subsolution of \eqref{HJf}.
Since $\bar u(x_0,t_0)=u_\b(x_0,t_0)+\frac{\d}{C_\b}$,  we get  a contradiction to the maximality of $u_\b$ among the subsolutions of \eqref{HJf}.
\end{proof}
%%%

\begin{lemma}\label{reg_sub}
Let $v$ be a subsolution of \eqref{HJf}. Then there exists a sequence of subsolutions $v_\d\in C^1(Q)$ such that $v_\d$
tends to $v$ locally uniformly for $\d\to 0$.
\end{lemma}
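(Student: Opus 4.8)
The plan is to obtain $v_\d$ from $v$ by mollifying first in the space variable and then, more delicately, in time, using the convexity of $H$ at each step; the one genuinely nonclassical point is that the Caputo operator $\fdz$ is not invariant under translations of the time variable (its lower endpoint is pinned at $0$), so a time mollification does not commute with it.

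\emph{Spatial mollification.} Let $\rho_\d$ be a standard mollifier on $\R^d$ with $\supp\rho_\d\subset B_\d$ and set $v^1_\d(x,t)=\int_{\R^d}v(x-y,t)\rho_\d(y)\,dy$. Convolution in $x$ commutes with $\fdz$ and with $D$, so $\fdz v^1_\d(x,t)=\int\fdz v(x-y,t)\rho_\d(y)\,dy$, while Jensen's inequality and the convexity of $H$ give $H(Dv^1_\d(x,t))\le\int H(Dv(x-y,t))\rho_\d(y)\,dy$; adding these and using $\fdz v+H(Dv)\le 0$ a.e.\ yields $\fdz v^1_\d+H(Dv^1_\d)\le 0$ a.e.\ in $Q$. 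Since $v^1_\d(x,0)\le\int g(x-y)\rho_\d(y)\,dy\le g(x)+L_g\d$, the function $v^2_\d:=v^1_\d-L_g\d$ — which has the same gradient and the same Caputo derivative as $v^1_\d$, being $v^1_\d$ minus a constant — is still an a.e.\ subsolution, now with $v^2_\d(x,0)\le g(x)$, and satisfies $Dv^2_\d\in L^\infty(Q)$ with the same bound as $Dv$, $v^2_\d\in C^\infty$ in $x$, and $v^2_\d\to v$ locally uniformly as $\d\to 0$.

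\emph{Temporal mollification.} Here I would mollify in time with a forward kernel whose width vanishes at $t=0$. Fix $\eta\in C^\infty_c((-1,0))$ with $\eta\ge 0$, $\int\eta=1$, and a smooth $\tilde\l$ with $0<\tilde\l(t)\le\min(1,t)$, $\tilde\l(t)=t$ for $t\le\tfrac12$ and $\tilde\l(t)=1$ for large $t$; set $\l_\e=\e\tilde\l$ and
\[
 v^3_{\d,\e}(x,t)=\int_{-1}^{0}v^2_\d\big(x,\,t-s\,\l_\e(t)\big)\,\eta(s)\,ds ,\qquad (x,t)\in Q .
\]
The arguments $t-s\,\l_\e(t)$ stay in $(t,\,t+\l_\e(t))\subset(0,\infty)$, so $v^3_{\d,\e}$ is well defined on $Q$ and continuous on $\ov Q$ with $v^3_{\d,\e}(x,0)=v^2_\d(x,0)\le g(x)$; rewriting $v^3_{\d,\e}(x,t)=\int_{\R}v^2_\d(x,r)K(t,r)\,dr$ with the smooth kernel $K(t,r)=\l_\e(t)^{-1}\eta\big((t-r)/\l_\e(t)\big)$ (which satisfies $\int_{\R}K(t,r)\,dr=1$, hence $\int_{\R}\partial_t K(t,r)\,dr=0$) one checks that $v^3_{\d,\e}\in C^1(Q)$, with $Dv^3_{\d,\e}\in L^\infty$ bounded as before and $v^3_{\d,\e}\to v^2_\d$ locally uniformly. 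For the equation, Jensen applied to $H$ gives $H(Dv^3_{\d,\e}(x,t))\le\int_{-1}^0 H\big(Dv^2_\d(x,t-s\,\l_\e(t))\big)\,\eta(s)\,ds$, and inserting this together with the change of variables $r=\tau-s\,\l_\e(\tau)$ into $\fdz v^3_{\d,\e}(x,t)$ reduces the estimate to the commutator between $\fdz$ and the time shift; since $\l_\e(\tau)\le\e\,\tau$ near the origin, this commutator can be bounded, uniformly in $(x,t)\in Q$, by $C\e^\b$ with $C$ depending only on $H$ and $\|Dv\|_\infty$, whence
\[
 \fdz v^3_{\d,\e}(x,t)+H\big(Dv^3_{\d,\e}(x,t)\big)\le C\e^\b\qquad\text{a.e.\ in }Q .
\]
Finally, since $\fdz\big(t^\b\big)\equiv\G(1+\b)$, the function $v_\d:=v^3_{\d,\e(\d)}-\tfrac{C}{\G(1+\b)}\,\e(\d)^\b\,t^\b$ (with $\e(\d)\to 0$) has the same gradient as $v^3_{\d,\e(\d)}$, trace $v_\d(x,0)\le g(x)$, $\partial_t v_\d(\cdot,x)\in L^1_{\mathrm{loc}}(0,\infty)$ for all $x$, and Caputo derivative lowered by exactly $C\e(\d)^\b$, so $\fdz v_\d+H(Dv_\d)\le 0$ a.e.\ in $Q$; thus each $v_\d$ is a subsolution of \eqref{HJf}, $v_\d\in C^1(Q)$, and $v_\d\to v$ locally uniformly.

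The step I expect to be the crux is the commutator estimate asserted above: showing that the failure of $\fdz$ to commute with a time mollification contributes only an $O(\e^\b)$ error \emph{uniformly up to $t=0$}. Near the origin a subsolution has only weak (H\"older) regularity in $t$, and a mollification of fixed width would generate an error of order $\e^\b t^{-\b}$, which cannot be absorbed by subtracting any function of $t$ that is continuous at $0$; letting the width collapse like $\e\,t$ as $t\to 0$ is precisely what removes this obstruction, after which the remaining verifications (differentiation under the integral sign, the change of variables, Fubini, and the bookkeeping with the Beta integral $\int_0^{\th}(t+\th-\tau)^{-\b}\tau^\b\,d\tau$) are routine.
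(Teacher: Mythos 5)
Your spatial step is fine (mollification in $x$ commutes with $\fdz$, Jensen for $H$, subtract $L_g\d$ for the trace), but the temporal step has a genuine gap exactly at the point you flag as the crux. Writing $v^3_{\d,\e}(x,t)=\int_{-1}^0 v^2_\d(x,\phi_s(t))\eta(s)\,ds$ with $\phi_s(t)=t-s\l_\e(t)$ and changing variables $r=\phi_s(\tau)$, the error you must control is
\[
\frac{1}{\G(1-\b)}\int_{-1}^0\eta(s)\int_0^{\phi_s(t)}\partial_r v^2_\d(x,r)\,k_s(r)\,dr\,ds,
\qquad k_s(r)=\frac{1}{\bigl(t-\phi_s^{-1}(r)\bigr)^\b}-\frac{1}{\bigl(\phi_s(t)-r\bigr)^\b}\ \ge 0 .
\]
Since $\l_\e$ is $C\e$-Lipschitz one gets $0\le k_s(r)\le C\e\,(\phi_s(t)-r)^{-\b}$, so the error is bounded by $C\e$ times the integral of the \emph{positive part} of $\partial_r v^2_\d$ against the Caputo kernel. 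But the definition of an a.e.\ subsolution gives only $\partial_t v(\cdot,x)\in L^1_{\textrm{loc}}(0,\infty)$ and an upper bound on the \emph{signed} weighted average $\fdz v\le -H(Dv)$; it gives no control on $(\partial_t v)_+$ against the kernel, and no two-sided time modulus (the subsolution inequality only yields one-sided bounds such as $v(x,t)-v(x,0)\le Ct^\b$). Your assertion that the commutator is $O(\e^\b)$ uniformly with a constant depending only on $H$ and $\|Dv\|_\infty$, and your premise that a subsolution is H\"older in $t$, are therefore unproven and, as far as I can see, not recoverable from the hypotheses: a subsolution can have arbitrarily large interleaved positive and negative spikes of $\partial_t v$ that keep $\fdz v$ bounded above while making your error term blow up. Integrating by parts to move the derivative onto $k_s$ does not rescue it, because the boundary term and the kernel $\partial_r k_s\sim \e(\phi_s(t)-r)^{-1-\b}$ again require a time modulus better than H\"older-$\b$.

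For comparison, the paper sidesteps the commutator entirely: extend $v(x,t)=v(x,0)$ for $t<0$, so that $\fdz v(x,t)=\frac{1}{\G(1-\b)}\int_0^{\infty}\partial_\t v(x,t-\t)\,\t^{-\b}\,d\t$ is a genuine convolution on the whole time line; a single standard space--time mollification then commutes \emph{exactly} with $\fdz$, Jensen handles $H(Dv_\d)$, and subtracting $C\d$ fixes the initial condition. No quantitative time regularity of $v$ is needed. If you want to salvage your route, you would have to either first establish a uniform time modulus for subsolutions (which is not available) or restructure the time regularization so that it acts by exact convolution compatible with the memory kernel, which is essentially the paper's constant-extension trick.
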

\begin{proof}
Given a subsolution $v$, we define $v(x,t)=v(x,0)$ per $t\in(-\infty,0)$, hence we can write
\[\fdz v(x,t)=\frac{1}{\G(1-\b)}\int_0^t\frac{\pd_\t v(x,t-\t)}{\t ^\b}d\t=\frac{1}{\G(1-\b)}\int_0^{\infty}\frac{\pd_\t v(x,t-\t)}{\t ^\b}d\t.\]
Let $v_\d=v*\r_\d(x,t)$ where $\r_\d$ is a standard mollifier in $\R^{d+1}$, i.e. $\r_\d(\cdot)=\frac{1}{\d^{d+1}}\r\left(\frac{\cdot}{\d}\right)$
with $\r$ a smooth function such that $\supp\, \rho\,\subset\{|x|<1,|t|<1\}$ and $\int_{\R^{d+1}}\r dxdt=1$. Then $v_\d\to v$ locally uniformly for $\d\to 0$
and by convexity
\begin{equation}\label{conv}
    H(Dv_\d(x,t))\le  (H(Dv)*\r_\d)(x,t)\qquad\forall (x,t)\in Q.
\end{equation}
Moreover
\begin{align*}
    (\fdz v(x,t)*\r_\d)(x,t)&=\int_{\R^{d+1}} \frac{1}{\G(1-\b)} \left(\int_0^{\infty} \frac{\pd_t v(y,s-\t)}{\t ^\b}d\t\right)\r_\d(x-y,t-s)dsdy\\
  &=  \frac{1}{\G(1-\b)}\int_0^{\infty}\left( \int_{\R^{d+1}}  \pd_t v(y,s-\t) \r_\d(x-y,t-s) dsdy \right)\frac1{\t ^\b}d\t\\
 & =  \frac{1}{\G(1-\b)}\int_0^{\infty}\left( \int_{\R^{d+1}}  \pd_t v(y,r) \r_\d(x-y,t-\t-r) drdy \right)\frac1{\t ^\b}d\t\\
& =\frac{1}{\G(1-\b)}\int_0^{\infty}( \pd_tv*\r_\d) (x,t-\t)\frac1{\t ^\b}d\t=\fdz v_\d(x,t).
\end{align*}
Replacing the previous identity and \eqref{conv} in \eqref{subHJf} , we get
\[\fdz v_\d(x,t)+  H(Dv_\d(x,t))\le 0\qquad \forall (x,t)\in Q.\]
Since $\|v-v_\d\|_\infty\le C\d$, with $C$ depending on $\|Dv\|_\infty$, by subtracting
$C\d$ to $v_\d$, we have that $v_\d$ also satisfies $\eqref{subIC}$.
\end{proof}
%%%%%%%
\begin{remark}\label{visco}
It is well known that Hamilton-Jacobi equations such as \eqref{HJ} in general do not admit classical solutions and the
correct notion of weak solution is the one of  viscosity solution (\cite{bcd}).
A theory of viscosity solutions for a general class of  Hamilton-Jacobi equations with Caputo time derivative have been recently developed  in  \cite{gn,ty}. However, in these papers, the connection between   Hamilton-Jacobi equations and the corresponding optimal control theory
has not been pursued. In Theorem \ref{sol}, we establish this connection for a.e. (sub-)solutions, but we are not able to show the
corresponding property for viscosity solutions. Indeed, in the proof of the subsolution and supersolution conditions, applying the Ito's formula  as in the classical viscosity solution argument, we get an equation involving Riemann-Liouville time derivative, see for example \eqref{HJrl}.
The delicate point is that, for passing from \eqref{HJrl} to \eqref{HJc}, we   perform a fractional integration  and therefore we need that the
equation is satisfied globally, while the notion of viscosity solution is only local.
\end{remark}
%%%%%%%%%%%%%%%%%%%%%%%%%%%
%                         %
%%%%%%%%%%%%%%%%%%%%%%%%%%%
\section{Integral formula and numerical examples }\label{sec:numerical}
We   propose some  examples in order to show   a comparison between  $u$, the solution of the classical Hamilton-Jacobi  equation \eqref{HJ},  and  $u_\b$,
the solution  of the time-fractional Hamilton-Jacobi equation \eqref{HJf}.
We start  rewriting formula \eqref{HLf} as
\begin{equation}\label{HLfbis}
u_\b(x,t)= \int_0^{\infty} \min_{y\in\R^d} \left\{rL\left(\frac{x-y}{r}\right)+g(y)\right\}\cE_\b(r,t)dr=\int_0^\infty u(x,r)\cE_\b(r,t)dr
\end{equation}
where $u$ is given by the formula \eqref{HL} and $\cE_\b(\cdot,t)$ is the PDF of
$E_t$. Recalling \eqref{pdf_trick},   \eqref{HLfbis} can be also rewritten as
\begin{equation}\label{HJfnum}
u_{\beta}(x,t)=\frac{t}{\beta} \int_0^\infty u(x,s)\frac{g_\beta(ts^{-1/\beta})}{s^{1 + \frac{1}{\beta}}}ds.
\end{equation}
We will use formula \eqref{HJfnum} to compute the function $u_\b$.  We assume  that the function $u$ is known in order to avoid additional numerical errors
due to its approximation which could further affect $u_{\beta}$ and hide some important properties. Moreover we approximate the integral by a quadrature formula and we
 employ the Matlab toolbox   Stable Distribution \cite{mat} to compute $g_{\beta}(s)$.   The        toolbox
requires    4 parameters $(\alpha, \hat\beta, \gamma, \delta)$ in order to compute a stable distribution (see \cite{pen}). For the distribution $g_{\beta}$
corresponding to the value $\beta=0.4,0.5, 0.6,0.8$ used in the tests,  we consider the following parameters
\begin{center}
\begin{tabular}{r|c|c|c|c|}
&$\alpha$&$\hat{\beta}$ &$\gamma$&$\delta$\\ \hline
$\beta=0.4$&$\beta$&1&$\gamma_c$&$\gamma_c-0.15$\\ \hline
$\beta=0.5$&$\beta$&1&$\gamma_c$&$\gamma_c$\\ \hline
$\beta=0.6$&$\beta$&1&$\gamma_c$&$\gamma_c$+0.15\\ \hline
$\beta=0.8$&$\beta$&1&$\gamma_c$&$\gamma_c$+0.5\\ \hline
\end{tabular}
\end{center}
having set $\gamma_c=\frac{1}{2}$. 
We  to briefly describe the algorithm:
\begin{algorithm}
\caption{Computation of $u_\b$}
%\label{alg:buildtree}
\begin{algorithmic}[1]
\STATE{Define a uniform grid  of size $\Delta x$ in space   and $\Delta t$ in time;}
\STATE{Approximate $\int_0^\infty$ in \eqref{HJfnum} with $\int_0^M$ for a given parameter $M$;}
\STATE{Define  a partition of $[0,M]$ of size   $\Delta s = \frac{M}{N_{int}}$ given by the points   $s_k = k\Delta s$, $k = 0, \hdots, N_{int}$;}
\STATE{Compute the matrix $g_\b(s_k,t_j)$ by the toolbox   Stable Distribution;}
\STATE{Compute $\cE(s_k,t_j)$ by \eqref{pdf_trick};}
\STATE{Define $ U(i,k) := u (x_i,s_k)$ where $u$ is the solution of \eqref{HJ};}
\STATE{Compute the integral in \eqref{HJfnum} by midpoint rule;}
\RETURN $u_{\beta}(x_i,t_j).$
\end{algorithmic}
\end{algorithm}

%%%%%%%%%%%%%%%%%%%%%
%%Numerical notes   %
%%%%%%%%%%%%%%%%%%%%%
\subsection{Test 1}
 Consider the Hamilton-Jacobi equation
\begin{equation}\label{HJ_ex1}
 \pd_t u+ |Du|^2=0, \qquad (x,t)\in\R\times (0,\infty),
\end{equation}
with the initial datum   $g(x) = -|x|^2$.  Then,  the solution of the problem is given  by
\begin{equation*} 
u(x,t) =  - (|x| + t)^2  \qquad (x,t)\in\R\times (0,\infty).
\end{equation*}
For $\b\in (0,1)$,   the solution of
\begin{equation}\label{HJf_ex1}
\fdz u+|Du|^2=0
\end{equation}
with the same  initial datum is given by
\begin{equation}\label{ex:solf}
u_\beta(x,t)  =-\int_0^\infty  (|x| + r)^2\cE_\b(r,t)dr.
\end{equation}
%\begin{remark}
%It is interesting to show that \eqref{ex:solf}
%\[
%    Du(x,t)=-2 \int_0^\infty  (|x| + r) \cE_\b(r,t)dr\frac{x}{|x|}
%    \]
%and, by integration by parts and the property of the process $E_t$, we have
%\begin{align*}
%    \fdz u_\beta(x,t) &= - \int_0^\infty  (|x| + r)^2 \fdz \cE_\b(r,t)dr= + \int_0^\infty  (|x| + r)^2 \pd_r \cE_\b(r,t)dr\\
%    &=-2 \int_0^\infty  (|x| + r) \cE_\b(r,t)dr.
%\end{align*}
%Therefore, by \eqref{eq_E}, we get that $u_\b$ is a solution of \eqref{HJf_ex1}.
%\end{remark}
In order to highlight the impact of $\beta$ on the solution of the time-fractional Hamilton-Jacobi equation, we consider $x=0$ in \eqref{ex:solf} and study the evolution of $u(0,t)$ for $t\in(0,2]$ for different values of $\beta$. By \eqref{moment} and \eqref{ex:solf}, we have
$$u_\b(0,t)=\E[E_t^2]=C(2, \beta) t^{2\beta}.$$
Comparing the solutions of \eqref{HJ_ex1} and \eqref{HJf_ex1}, see figure \ref{test1}, we see that the effect of the Caputo derivative   is to induce a faster evolution for  small time, while a slower one as the time increases, a typical effect of the polynomial decay at infinity of the distribution of the subordinator.
\begin{figure}[!h]
\centering
\includegraphics[width=0.5\textwidth]{./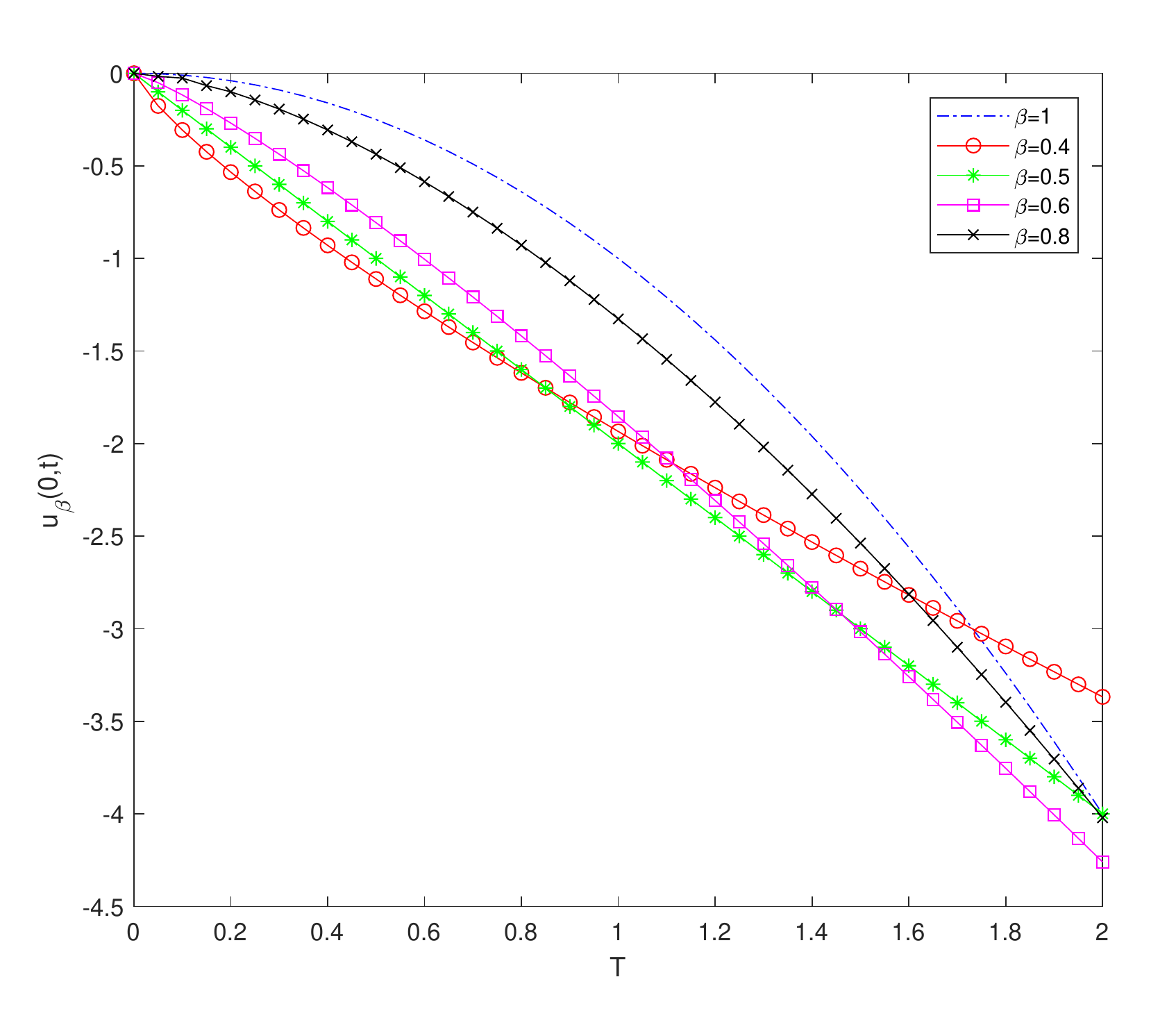}\,
\caption{$u_\b(0,t)$  for $t\in(0,2]$ and $\beta=0.4,\,0.5,\,0.6,\,0.8,\,1$.}
\label{test1}
\end{figure}
%%%%%%%%%%%%%%%%%%%%%%%%%%%%%%%%%%%%%

\subsection{Test 2}
We consider equation \eqref{HJ_ex1} with the initial condition
\begin{equation}
g(x) = \max\{0, x^2 - 1\}.
\end{equation}
In this case, the solution of \eqref{HJ_ex1}  is given by
\begin{equation}
u (x,t) = \max\left\{0, \frac{x^2}{1 + 2t} - 1\right\}.
\end{equation}
As before we compute  $u_\beta$ by means of formula  \eqref{HJfnum}.
\begin{figure}[h!]
\centering
\subfigure{\includegraphics[width=0.28\textwidth]{./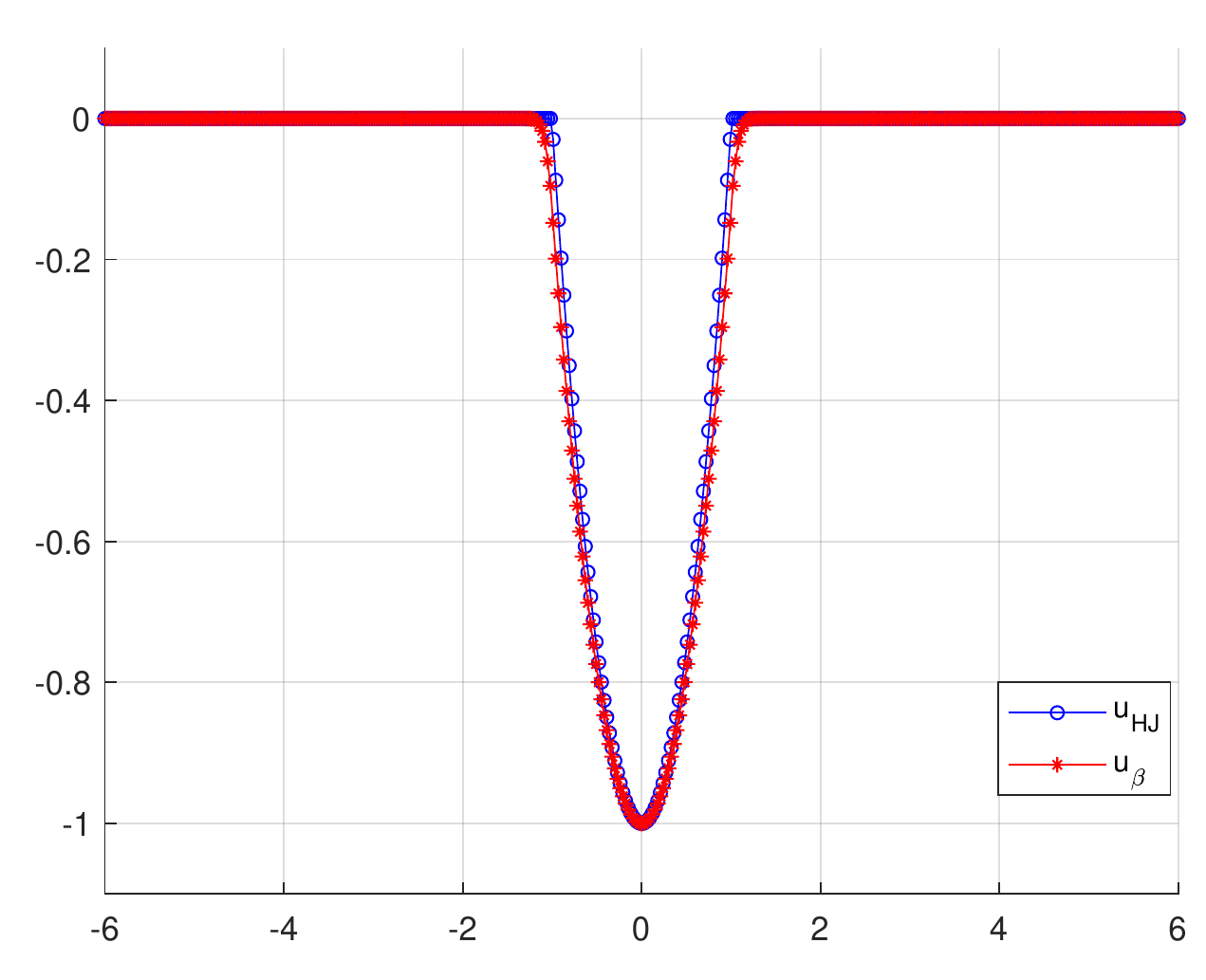}\,
\includegraphics[width=0.28\textwidth]{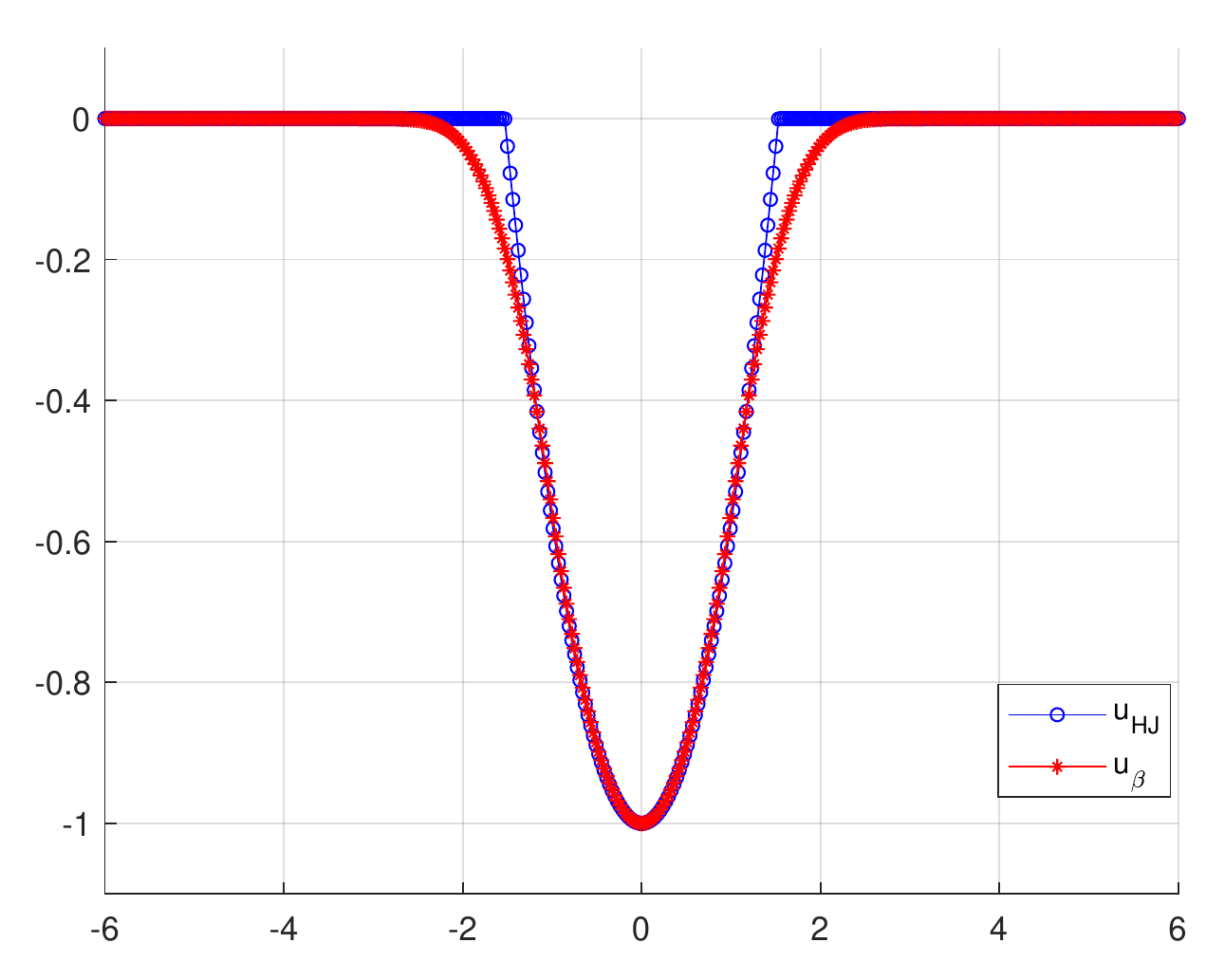}}
\subfigure{\includegraphics[width=0.28\textwidth]{./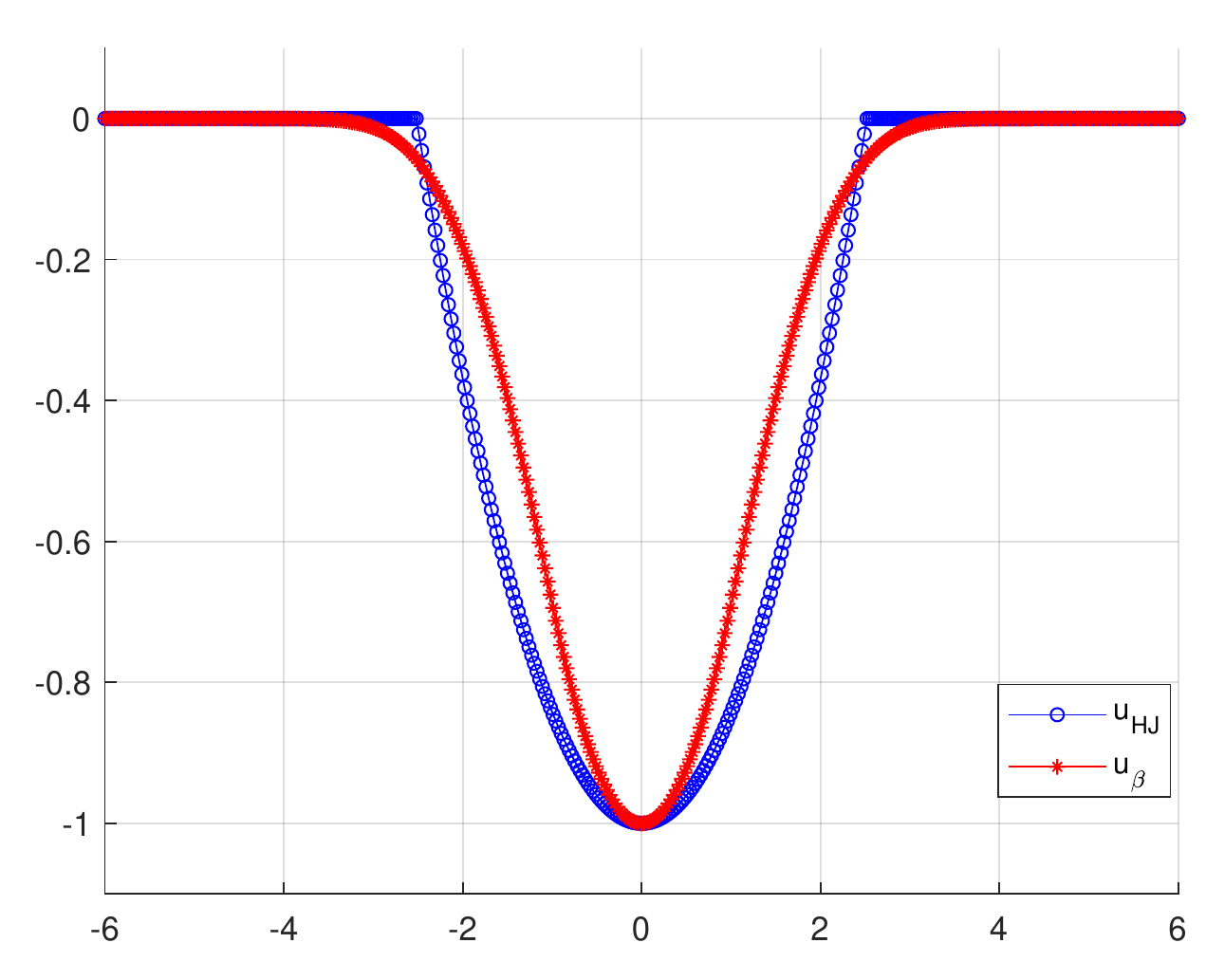}\,
\includegraphics[width=0.28\textwidth]{./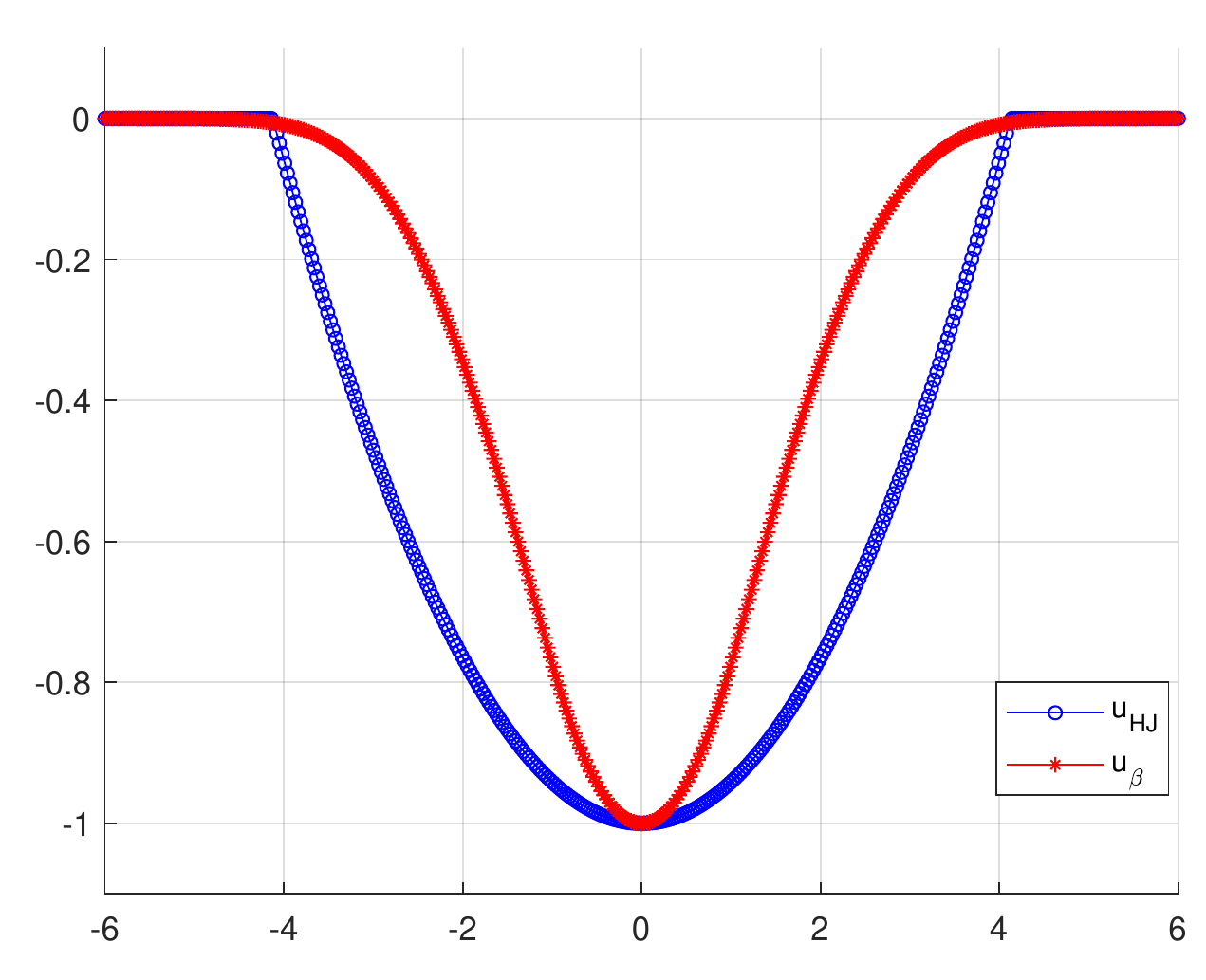}}
\caption{Comparison between $u$ and $u_{\beta}$ for $\beta=\frac 1 2$ at time $t=0.005$ (left-top), $t=0.5$ (right-top),
$t=2$ (left-bottom), and $t=8$ (right-bottom). }
\label{test2tpiccolo}
\end{figure}
Comparing the behavior of $u_\beta$ and $u$ in figure \ref{test2tpiccolo}, we can see that
 for small times the evolution of $u_{\beta}$ is faster than the one of  $u$, since  $u_{\beta}(x,t) \leq u (x,t)$ and $\textrm{supp}(u(t)) \subset\textrm{supp}(u_\beta(t))$ for $t \leq 0.5$. While the time increases, the evolution of $u_\b$ slows down with respect to the one of $u$.
It is also  interesting to observe the   more regular behavior of  $u_{\beta}$ in the space variable. Indeed  the initial edge of $g$ is instantaneously smoothed for the fractional equation, while
it persists for  \eqref{HJ_ex1} (see Figure \ref{test2uprimo}). We also observe that $u_\beta$ is not $C^2$ in space and  a ``memory'' of the initial edge of $g$
is preserved in the second derivative.
\begin{figure}[h!]
\centering
\includegraphics[width=0.4\textwidth]{./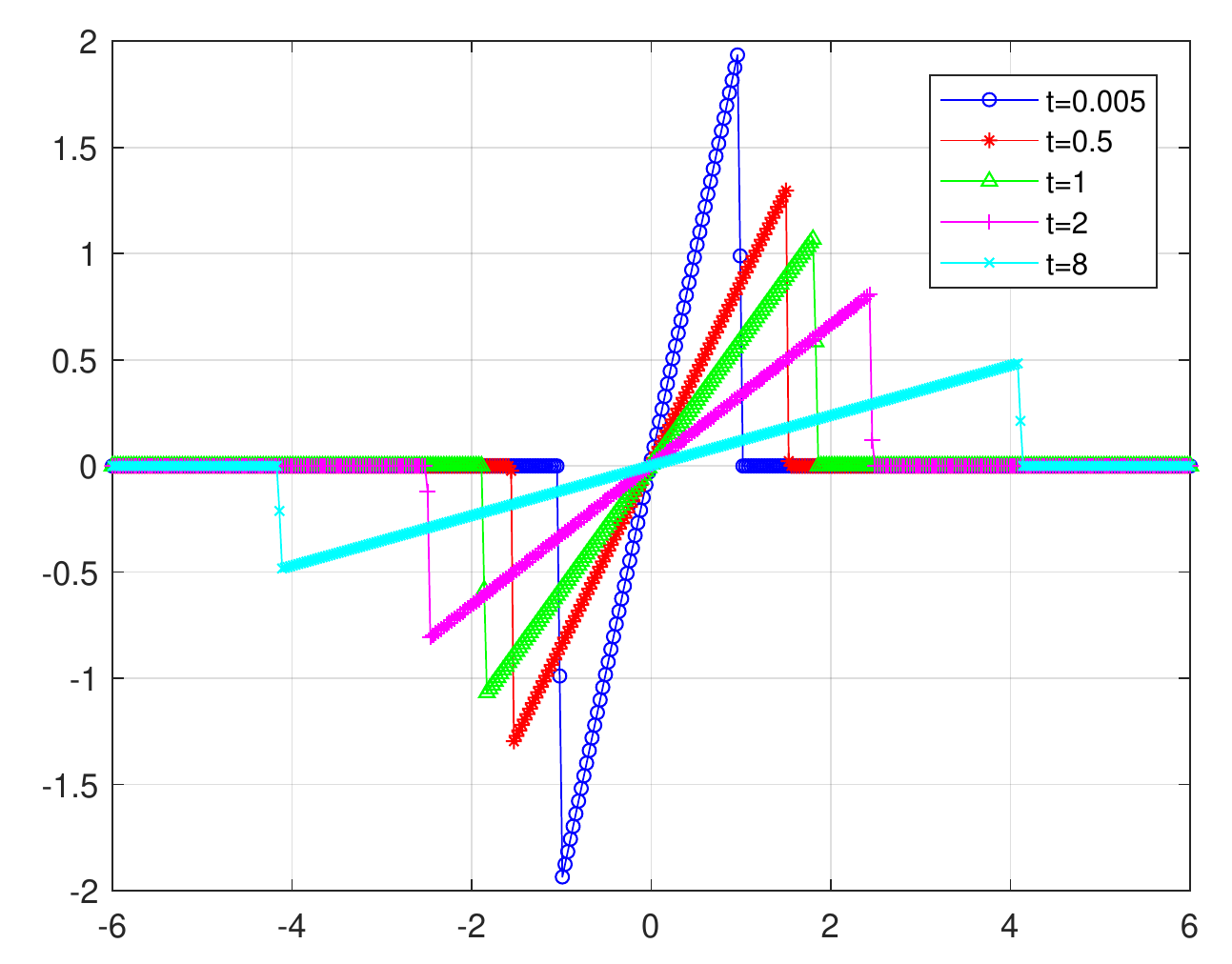}\,
\includegraphics[width=0.4\textwidth]{./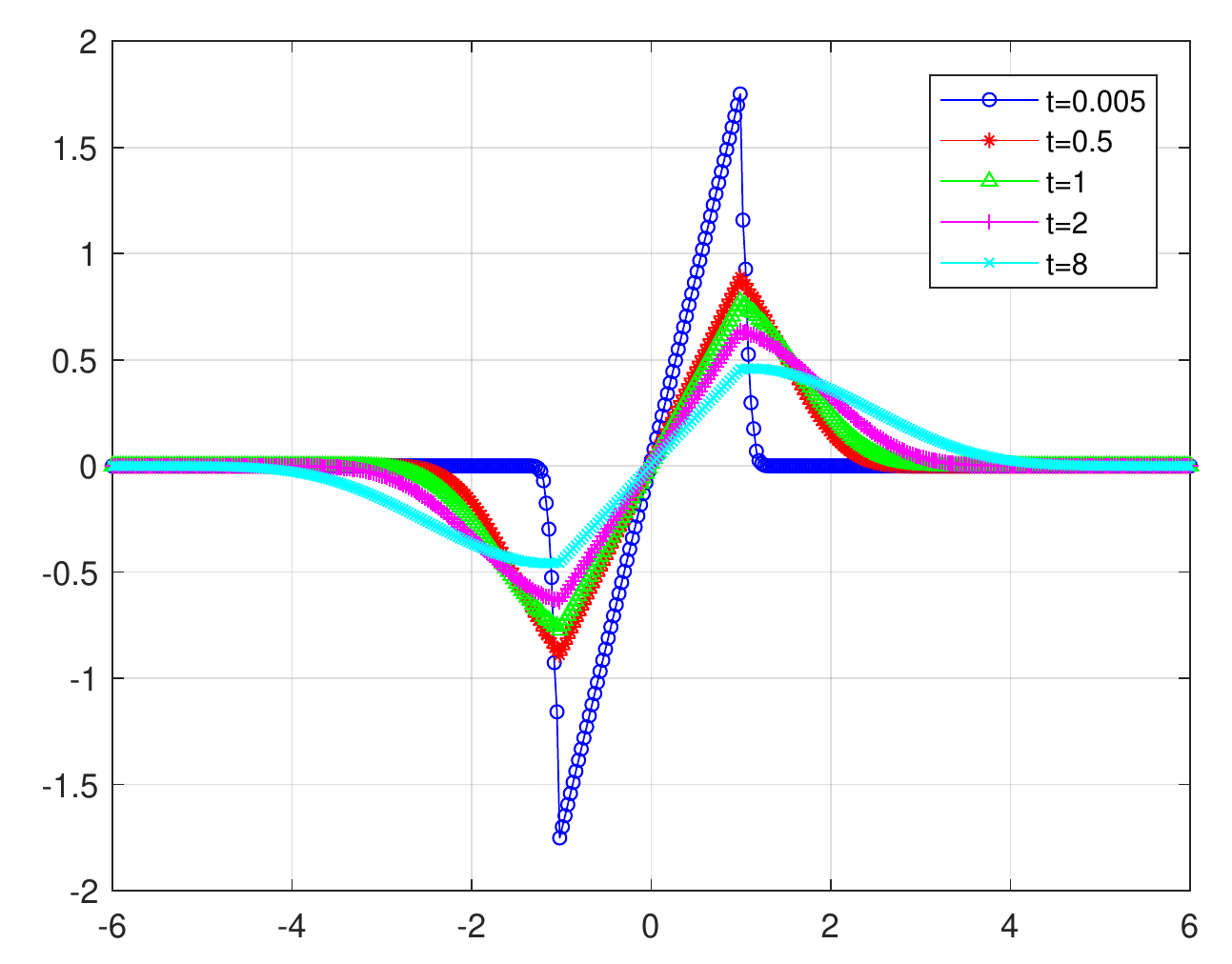}\,
\caption{Space derivative   of $u$ (left) and $u_{\beta}$ with $\beta=\frac 1 2$ (right), at different times.}
\label{test2uprimo}
\end{figure}

%%%%%%%%%%%%%%%%%%%%%%%%%%%%%%%%%%%%%%%%%%

\subsection{Test 3}
The last test refers to the Hamilton-Jacobi equation
\begin{equation}\label{LS}
 \pd_t u+|Du|=0 \qquad (x,t)\in\R^2\times (0,\infty),
\end{equation}
which represents the  motion at a  constant speed of a level curve of the viscosity solution.
Even if the Hamiltonian $H(p)=|p|$ does not satisfy assumptions \eqref{hyp_H}, it is well known that formula \eqref{HL} is  still valid   and it simplifies in $u(x,t)=\min\{g(y):\, |x-y|\le t\}$. We consider the corresponding   time-fractional equation
\begin{equation}\label{LSf}
 \fdz u+|Du|=0 \qquad (x,t)\in\R^2\times (0,\infty),
\end{equation}
whose  solution if given by
$u_\b(x,t)=\E_{x,t}[\min\{g(y):\, |x-y|\le E_t\}]$.
In the first example, see Figure \ref{test3hjbfrac1}, we compare the evolution of a unitary circle for \eqref{LS} and  for \eqref{LSf} with $\b=\frac 1  2$.
Given  the initial datum  $g(x)=|x|^2-1$, we observe that
also in the fractional case  its evolution is given by circles of increasing radius, but the propagation speed is not uniform
and tends to slows down after some times. A similar property it is also observed   in the case of a initial front given by two circles, see Figure \ref{test3hjbfrac2}.
\begin{figure}[h!]
\centering
\includegraphics[width=0.45\textwidth]{./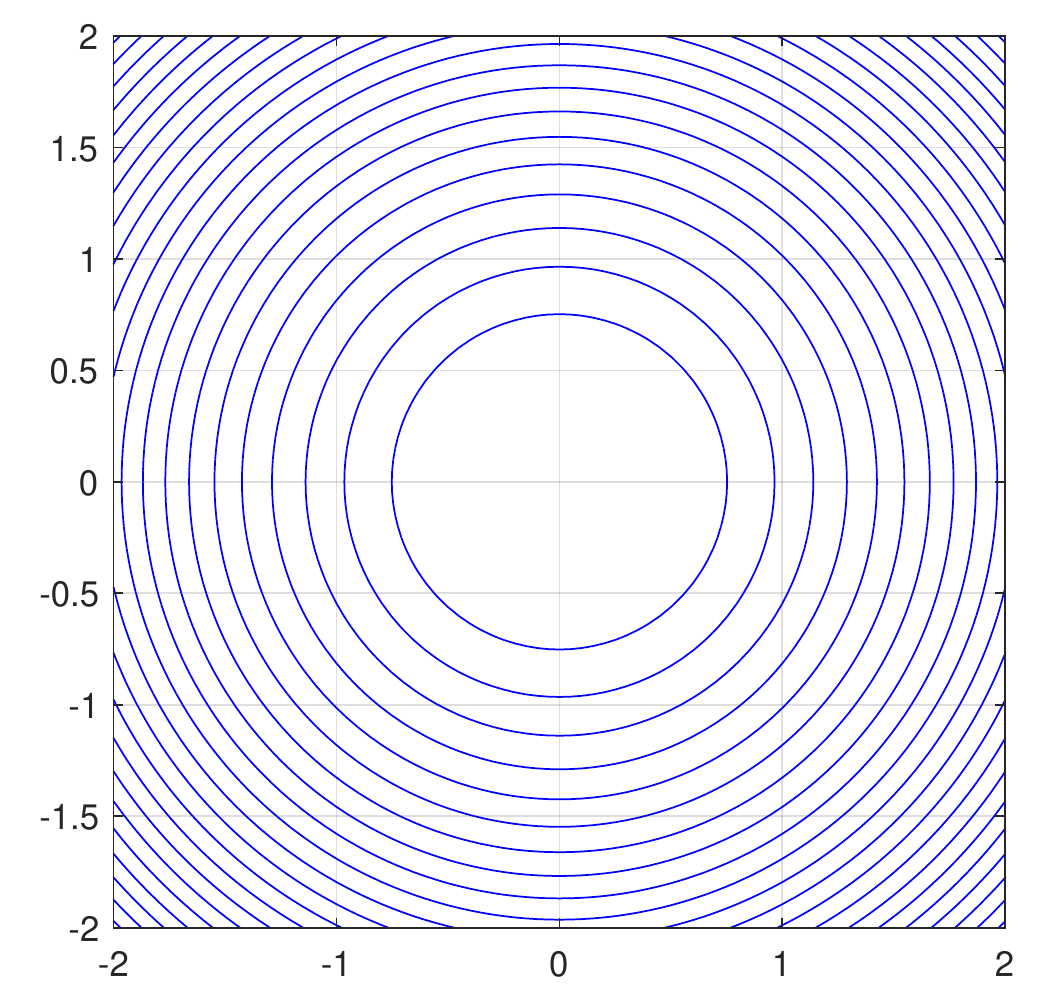}\,
\includegraphics[width=0.45\textwidth]{./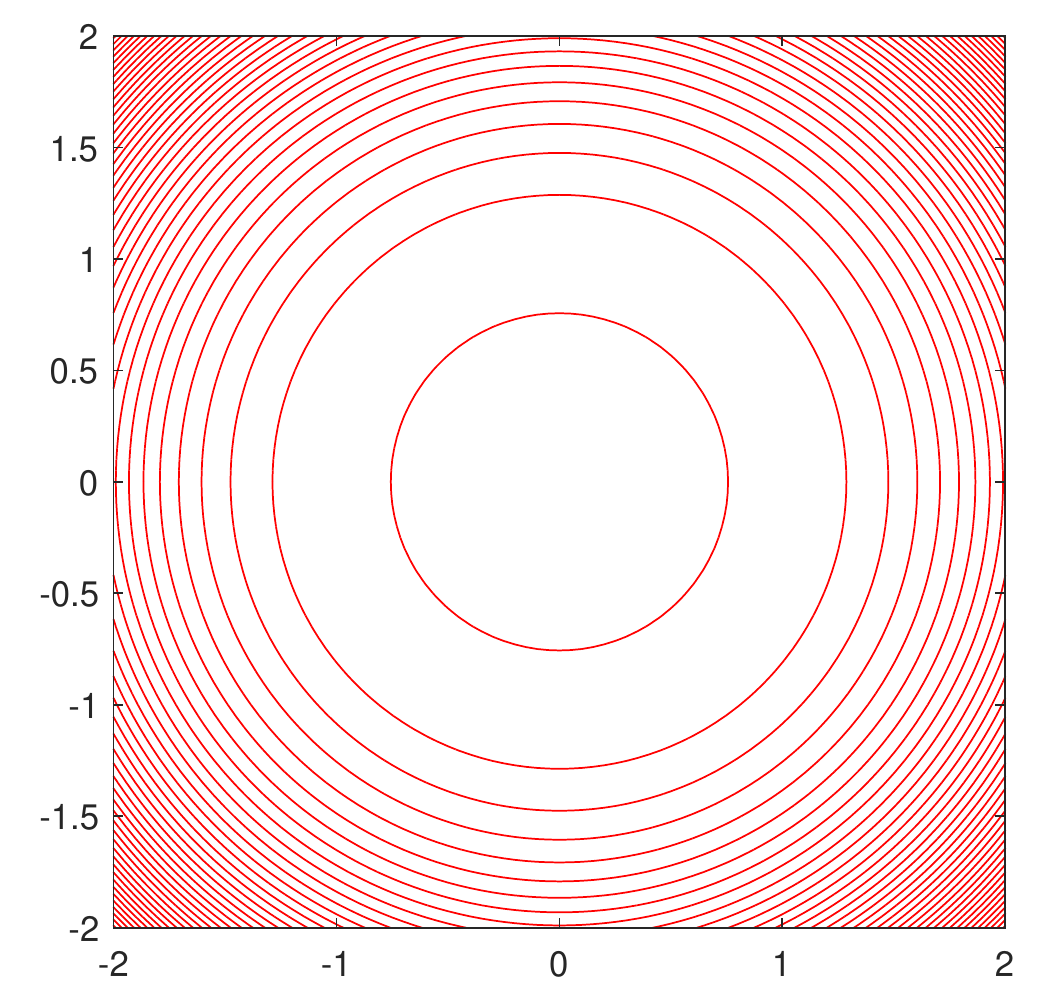}\,
\caption{Evolution of the 0-level sets of $u$ (left) and $u_{\beta}$ (right) with $\beta=\frac 1 2$, for $t\in[0,15]$.}
\label{test3hjbfrac1}
\end{figure}
%%%%
\begin{figure}[!]
\centering
\includegraphics[width=0.45\textwidth]{./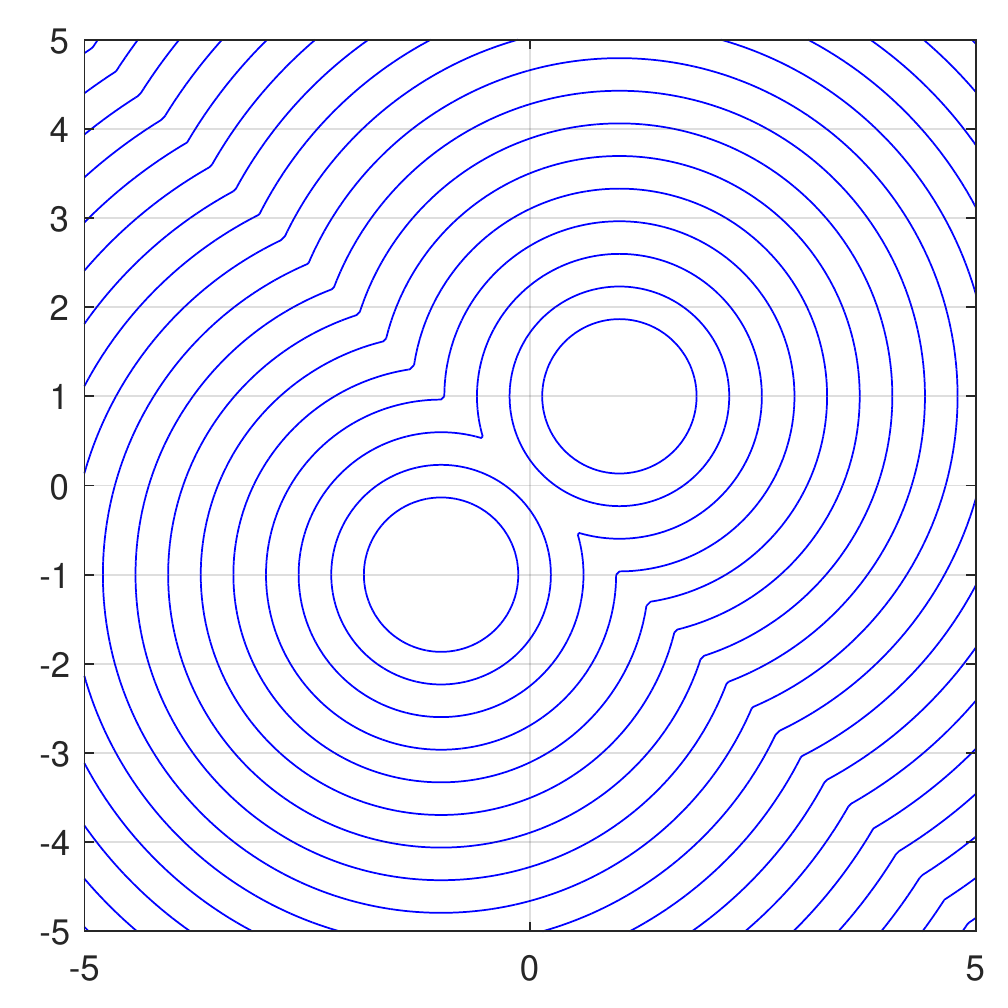}\,
%\caption{Contours ... $t\in(0,9]$ solution of the HJB equation.}
\includegraphics[width=0.45\textwidth]{./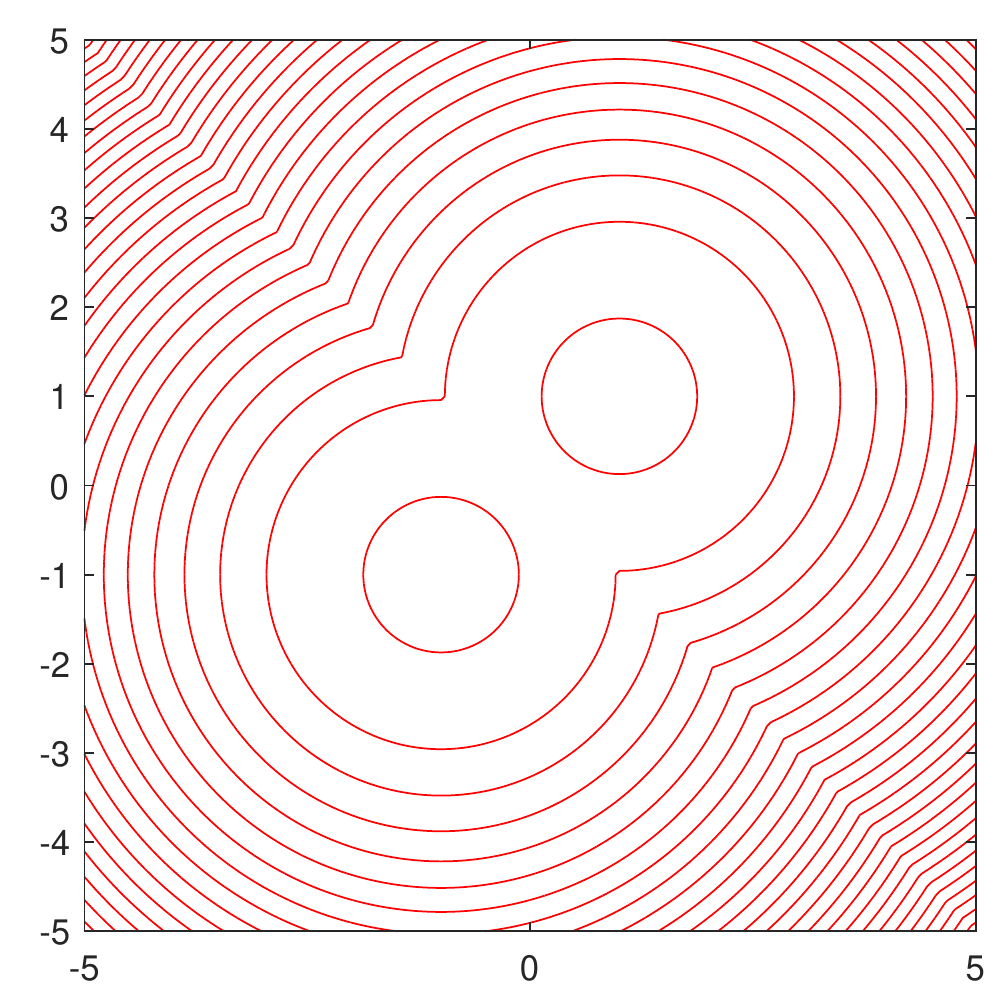}\,
\caption{Evolution of the 0-level sets of $u$ (left) and $u_{\beta}$ (right) with $\beta=\frac 1 2$  for $t\in [0,9]$.}
 %for   $g(x) = \min\{|x + (1,1)|^2 - 0.5, |x - (1,1)|^2 - 0.5\}.$}
\label{test3hjbfrac2}
\end{figure}
%%%%%%%%%%
%                             %
%%%%%%%%%%%

\end{document}